\theoremstyle{plain}
\newtheorem{theorem}{Theorem}[section]
\newtheorem{lemma}[theorem]{Lemma}
\newtheorem{proposition}[theorem]{Proposition}
\newtheorem{corollary}[theorem]{Corollary}
\newtheorem{assumption}[theorem]{Assumption}
\newtheorem{remark}[theorem]{Remark}
\theoremstyle{definition}
\newtheorem{definition}[theorem]{Definition}
\numberwithin{equation}{section}
\newcommand{\card}[1]{\textup{card}({#1})}
\newcommand{\inter}[1]{\textup{int}({#1})}
\newcommand{\clos}[1]{\textup{cl}({#1})}
\newcommand{\diam}[1]{\textup{diam}({#1})}
\newcommand{\argmax}{\textup{argmax}}
\newcommand{\Lip}{\textup{Lip}}
\newcommand{\abs}[1]{\left\vert#1\right\vert}
\newcommand{\Vrt}[1]{\left\Vert #1 \right\Vert}
\newcommand{\bb}[1]{\mathbb{#1}}
\begin{document}
\title{A convergent discretisation method for transition path theory for diffusion processes}

\author{Nada Cvetkovi\'{c}\thanks{\ Corresponding author: nada.cvetkovic@uni-potsdam.de} }
\author{Han Cheng Lie\thanks{\ hanlie@uni-potsdam.de}}
\affil{Universit\"{a}t Potsdam, Institut f\"{u}r Mathematik, Karl-Liebknecht Str. 24/25, D-14476 Potsdam OT Golm, Germany}

\author{Tim Conrad\thanks{\ conrad@mi.fu-berlin.de}}
\affil{Department of Mathematics and Computer Science, Freie Universit\"at Berlin, Arnimallee 6, 14195 Berlin, Germany}
\renewcommand\Affilfont{\small}

\date{}

\maketitle

\begin{abstract}
	Transition path theory (TPT) for diffusion processes is a framework for analysing the transitions of multiscale ergodic diffusion processes between disjoint metastable subsets of state space. Most methods for applying TPT involve the construction of a Markov state model on a discretisation of state space that approximates the underlying diffusion process. However, the assumption of Markovianity is difficult to verify in practice, and there are to date no known error bounds or convergence results for these methods. We propose a Monte Carlo method for approximating the forward committor, probability current, and streamlines from TPT for diffusion processes. Our method uses only sample trajectory data and partitions of state space based on Voronoi tessellations. It does not require the construction of a Markovian approximating process. We rigorously prove error bounds for the approximate TPT objects and use these bounds to show convergence to their exact counterparts in the limit of arbitrarily fine discretisation. We illustrate some features of our method by application to a process that solves the Smoluchowski equation on a triple-well potential.
\end{abstract}

\section{Introduction}
\label{sec:intro}

In many applications, one is often interested in understanding the rare transitions of a stochastic dynamical system between two metastable subsets of state space. Here, `metastable set' refers to a set in which the system spends a long time on average before exiting, and `rare' means that the probability of observing a transition between any two metastable sets is small. The multiscale behaviour of such a system is captured by the fact that the system spends very short times outside the union of metastable sets and spends long times inside this union. 

Transition path theory (TPT) is a framework for the analysis of transition events of multiscale ergodic diffusion processes between two metastable subsets of state space \cite{E2005242,weinan2006towards}. TPT has played an important role in the research area of molecular dynamics, where the conformational changes of a molecule occur over much longer time scales (milliseconds or longer) and fluctuations in bond lengths occur over very short time scales (nano- or femtoseconds). The conformational changes are of interest because they play an important role in chemical reactions, e.g. in cellular processes. A far from exhaustive list of references in this area of research is \cite{maragliano,NoePNAS,vandeneijnden_venturoli_ciccotti,bowman,silva_etal,Boekelheide,plattner,prinz,benjamin,Vanden-Eijnden2014,TPT_illustration}. In addition to molecular dynamics, TPT has been applied to drug design in pharmacology \cite{shukla2014activation}, to study flows in complex networks \cite{Cameron}, to study the temporal behavior of online topics on social networks \cite{lorenz2018tracking}, and to model atmospheric circulation in climate events \cite{ser2015dominant}. Versions of TPT for Markov jump processes and Markov chains \cite{TPT_MJP} and non-ergodic jump processes \cite{von2018statistical} have been developed. TPT has also been studied using the tools of stochastic analysis \cite{Lu2015}.

We now briefly outline the main concepts and objects of TPT for ergodic diffusion processes, and refer the reader to \cite{E2005242,weinan2006towards,TPT_illustration} for further details. For brevity, we will refer to ergodic diffusion processes simply as `processes' in what follows. Let $A$ and $B$ be two disjoint, simply connected, open subsets of the state space $S$ of a process $X$. Given an infinitely long trajectory of $X$, a finite segment of the trajectory is said to be `reactive' if, after leaving $A$, it enters $B$ before re-entering $A$. The probability that a trajectory of $X$ is reactive, conditioned on its current state being $X_0=x$, is given by the (forward) committor $q(x):= \mathbb{P}(X_{\tau_{A\cup B}(X)}\in B\vert X_0 = x)$. Here, $\tau_{A\cup B}(X)$ denotes the first hitting time of $X$ with respect to the set $A\cup B$. 

Under certain hypotheses on the drift and diffusion coefficient of the process, one can show that the committor is the unique solution of a Feynman-Kac or backward Kolmogorov partial differential equation (PDE) defined by the infinitesimal generator of the process and certain Dirichlet boundary conditions \cite[Eq. (10)]{weinan2006towards}. Consequently, the committor is twice continuously differentiable in the interior of $S\setminus (A\cup B)$. This exemplifies a general principle in probability theory, i.e. that objects that describe the averaged behaviour of the process exhibit certain regularity or smoothness properties. 

Using the above-mentioned principle, we can ask whether there exist smooth curves that represent the average behavior of reactive trajectories of the process, in the sense that for any $\epsilon>0$, the $L^\infty$ ball of radius $\epsilon$ centered at such a curve contains a reactive trajectory. In TPT these smooth curves are called `streamlines'; they are integral curves of a vector field called the `probability current'. Using the streamlines, one can construct so-called `transition tubes', by choosing a subset $A'$ on the boundary of $A$ and taking the union of the streamlines with initial conditions in $A'$. Given a probability measure on state space, e.g. the invariant Boltzmann-Gibbs measure, the probability that the process will transition from $A$ to $B$ via this transition tube can be computed from the measure of $A'\subset \partial A$ with respect to the surface measure on the boundary of $A$ that is induced by the measure on state space. The `dominant transition pathways' from $A$ to $B$ are the transition tubes with the highest probability. These pathways are important in applications, because they describe the most likely routes that will be taken by a multiscale stochastic dynamical system during transitions between metastable sets.

If the process is the solution to a Smoluchowski equation, i.e. a stochastic differential equation (SDE) where the drift coefficient is given by the gradient of a potential and the diffusion coefficient is proportional to the temperature, then one can additionally show that the probability current is proportional to the product of the Boltzmann-Gibbs density of the invariant measure with the gradient of the committor. If the state space is a subset of $\bb{R}^d$ for $d\in\{1,2,3\}$, then one can in principle completely avoid the task of collecting statistics of reactive trajectories, and apply deterministic numerical methods to compute the TPT objects described above. This can lead to useful visualisations of the behaviour of reactive trajectories, see e.g. \cite{TPT_illustration}. However, in many applications, the high dimension of the state space makes deterministic numerical methods for solving PDEs impractical. As a result, Monte Carlo-based methods are often used when computing TPT objects in high dimensions.

In many applications of TPT, one often uses a Markovian stochastic process to approximate the underlying diffusion process, where the state space of the Markovian approximation is obtained by discretising the state space of the diffusion process \cite{TPT_MJP}. The transition probabilities of the Markov approximation are estimated using short trajectory data. In molecular dynamics, these approximations are known as `Markov state models'; see e.g. \cite{SchuetteSarich} for a mathematical treatment of this subject and \cite{NoePNAS} for an application of TPT to a Markov state model. Since projecting the trajectories of the diffusion process onto the discrete state space results in a loss of the Markov property of the diffusion process, a correction step is needed to produce the Markov state model. The correction is often done by identifying a suitable `lag time', i.e. an observation time window which yields an approximately Markovian process on the discrete state space; see \cite{prinz2011markov} for an illustration. Although error analyses of Markov state models have been developed, e.g. in \cite{sarich2010approximation,SchuetteSarich}, the justification for the lag time selection is in practice heuristic, and the approximating process is non-Markovian. To the best of our knowledge, there is no error analysis for methods that use a non-Markovian process on a discrete state space to approximate TPT objects for a diffusion process.

\subsection{Contributions and outline}

The goal of this paper is to develop a new computational method for approximating certain objects of TPT for an ergodic diffusion process $X$. The inputs to this method are a discretisation of the state space $S$ of the underlying process $X$ and sample trajectories of $X$. The outputs are approximations of the committor, probability current, and streamlines of $X$. We develop error bounds for each approximation and use these error bounds to prove convergence of each approximate TPT object to the corresponding TPT object of $X$ in the `continuum limit' where the discretisation becomes arbitrarily uniformly fine.

We highlight some important features of our method. First, because our method computes the approximate TPT objects using only sample trajectories of $X$, it is not necessary to know or estimate the drift and diffusion coefficients of the SDE that defines the underlying process. It suffices to have a `black box' that generates the trajectory data. In this sense, our approach is `data-driven'. 

Second, our method does not involve constructing a Markovian stochastic process to approximate the underlying process $X$. In particular, we can avoid the task of choosing a suitable lag time, and the use of heuristic arguments to justify the choice of a lag time. This feature differentiates our method from methods that perform TPT that use Markov processes on discrete state spaces to approximate the underlying diffusion process of interest.

Third, our method comes with a rigorous error analysis. While there exist rigorous error analyses for Markov state models in the framework of transfer operators on continuous state spaces \cite{sarich2010approximation}, there are no rigorous error analyses for TPT for Markov processes. We prove bounds on the approximation error of each TPT object in terms of how fine the discretisation is. 

The paper proceeds as follows. We describe our discretisation methods in Section \ref{sec:setup} and describe how these lead to a non-Markovian jump process. In Section \ref{sec:convergence_analysis} we define the approximate committor, the approximate probability current, and the approximate streamlines of our method. The approximate committor and the approximate probability current are piecewise constant functions on the continuous state space $S$ of the underlying process $X$, while the approximate streamline is a piecewise linear trajectory in $S$. For each approximate object, we prove a bound on the approximation error with respect to the corresponding TPT object for the underlying process $X$, in terms of the largest diameter of the sets in the discretisation of the continuous state space $S$. We then use these error bounds to prove convergence as the sets in the discretisation shrink to points; this is the above-mentioned `continuum limit'. In Section \ref{sec:numerical_example} we present numerical results, in which we compare the performance of our approach with the finite differences method for TPT for diffusion processes. We conclude in Section \ref{sec:conclusion}.

\section{Setup}
\label{sec:setup}

In what follows, we follow the setup in \cite{weinan2006towards}. Let $X=\{X_t\}_{t\geq 0}$ denote the underlying diffusion process of interest. The state space of $X$ is an open, bounded, $d$-dimensional, simply connected set $S\subset\bb{R}^d$, and the boundary of $S$ is sufficiently regular for us to impose reflecting boundary conditions. We assume that $X$ is ergodic with respect to a probability measure $\mu$, that $\mu$ admits a density with respect to Lebesgue measure on $S$, and that the Lebesgue density of $\mu$ is strictly positive on $S$. We write $A$ and $B$ to denote two open, simply connected subsets of $S$ whose closures are disjoint.

Given an arbitrary set $U\subset S$, let $\inter{U}$, $\clos{U}$, $\partial U$, $U^\complement$ and $\mathbf{1}_U$ denote the interior, closure, boundary, complement with respect to $S$ (i.e. $S\setminus U$), and indicator function of $U$ respectively. The cardinality of an arbitrary set $V$ is written $\card{V}$, and the restriction of a function $f$ to $V$ is written $f\vert_V$. The transpose of a vector  $v\in\bb{R}^{d}$ is written $v^\top$; the $\ell_p$ norm for $1\leq p\leq \infty$ of $v$ is written $\abs{v}_p$. The Lipschitz constant of an arbitrary function $f$ by $\Vrt{f}_{\Lip}$. Given a finite measure $\nu$ on $S$, we denote the $L^p(\nu)$ norm on a space of functions defined on $S$ or a subset of $S$ by $\Vrt{\cdot}_{L^p(\nu)}$. We fix a probability space $(\Omega,\mathcal{F},\mathbb{P})$ that is assumed to be large enough to admit all random variables considered below.

\subsection{Discretisation}
\label{ssec:discretisation}

We review the relevant concepts from Voronoi tessellations and introduce some important terms concerning the discretisations that we will use in the sequel. Our choice of Voronoi tessellations as a discretisation method is motivated by their successful application in not only molecular dynamics \cite{lie_voronoi,venturoli_voronoi} but many other applications; see \cite{voronoi_aurenhammer,voronoi_du} and the references therein. The key property of Voronoi tessellations that we use are that Voronoi tessellations partition a set into polytopes that overlap at most at their boundaries, and that adjacency relations can be computed without computing the full Voronoi tessellation, i.e. without computing the vertices of every polytope in the tessellation.

A Voronoi tessellation of $S$ associated to a finite set of \emph{generators} $\{g_1,\ldots,g_n\}\subset S$ for some $n\in\bb{N}$ is a collection $\{S_1,\ldots,S_n\}$ of nonempty subsets of $S$, where each subset is the Voronoi cell defined by
\begin{equation*}
 S_i:=\{x\in S \ :\ \abs{x-g_i}_2\leq \abs{x-g_j}_2,\ j\neq i\}.
\end{equation*}
Thus, $S_i$ is the closed set consisting of all points in state space that are closer in the Euclidean metric to the generator $g_i$ than to any other generator. It can be shown that each Voronoi cell $S_i\subset \bb{R}^d$ is a $d$-polytope, i.e. a bounded, closed, convex subset such that $\dim{\inter{S_i}}=d$, that is defined as the intersection of finitely many half-spaces or equivalently as the convex hull of finitely many extreme points. The boundedness of each cell $S_i$ follows from the boundedness of the state space $S$. Since every Voronoi cell $S_i$ is a neighbourhood of its generator $g_i$, it has strictly positive Lebesgue measure. The Voronoi cells form a partition of the state space $S$, i.e.
\begin{equation*}
\clos{ S}=\bigcup_{i=1}^{n}S_i,\quad S_i\cap S_j=\partial S_i\cap \partial S_j,\  i\neq j,
\end{equation*}
so that Voronoi cells intersect at most at their boundaries, which have zero Lebesgue measure. We declare two distinct Voronoi cells $S_i$ and $S_j$ to be \emph{adjacent} if they share a common facet, i.e. if $\dim(S_i\cap S_j)=d-1$. Given a Voronoi tessellation $\{S_{i}\}_{i\in I}$, $I=\{1,\ldots,n\}$, its dual Delaunay graph is the graph $G=(I,E)$ with vertex set $I$ and edge set $E$ consisting of all pairs $(i,j)$ such that $S_i$ and $S_j$ are adjacent. 

Recall that, given a nonempty set $A\subset \bb{R}^d$, the Euclidean \emph{diameter} of $A$ is defined by $\diam{A} = \sup\{\abs{x-y}_2\ :\ x,y\in A\}$. This leads to the next definition.
\begin{definition}
\label{definition:width}
 The \emph{width} $\rho$ of a Voronoi tessellation $\{S_i\}_{i\in I}$ is the largest Euclidean diameter of the Voronoi cells, i.e.
 \begin{equation*}
  \rho(\{S_i\}_{i\in I}) := \sup_{i\in I} \diam{S_i}.
 \end{equation*}
\end{definition}
When there is no risk of confusion, we will omit the argument $\{S_i\}_{i\in I}$ of the width and simply write $\rho$. The width provides uniform control over the Euclidean diameter of all the cells in the tessellation. The smaller (respectively larger) the width, the finer (resp. coarser) the tessellation. We shall be interested in obtaining error bounds in the continuum limit, i.e. as $\rho$ decreases to $0$. In the continuum limit, the cells in the Voronoi tessellation shrink to points.

\begin{remark}
 \label{remark:do_not_need_to_compute_vertices_for_normals}
 An important property of Voronoi tessellations is that, if one knows that two Voronoi cells $S_i$ and $S_j$ that are generated respectively by $g_i$ and $g_j$ are adjacent, then the outer unit normal to $S_i$ on the facet $S_i\cap S_j=\partial S_i\cap S_j$ is given by $(g_j-g_i)/\abs{g_i-g_j}_2$. This is because the facet $S_i\cap S_j$ is contained in the hyperplane of points that are equidistant from $g_i$ and $g_j$ and $g_j-g_i$ is normal to this hyperplane.
\end{remark}

\subsection{Definition of approximating jump process}
\label{ssec:def_Y}

Recall that many applications of TPT involve the construction of a Markovian stochastic process on a discrete state space that approximates the underlying diffusion process on the continuous state space. In this section, we construct a continuous-time process on the Delaunay graph $G=(I,E)$ mentioned above. We explain why this process is not Markovian and show that this process preserves an important property of the underlying process $X$ on $S$.

Let $\{S_i\}_{i\in I}$ be a Voronoi tessellation with finite index set $I$, and let $G=(I,E)$ be the Delaunay graph associated to $\{S_i\}_{i\in I}$. We define a continuous time jump process $Y=\{Y_t\}_{t\geq 0}$ on $G$ by `projecting' the trajectory of the underlying process $X$ to $I$ according to
\begin{equation}
\label{eq:def_Y}
Y_t=\begin{cases}
i & X_t\in\inter{S_i},
\\
j & \exists\epsilon>0\text{ s.t. }\forall s\in (t-\epsilon,t),\ X_s\in\inter{S_j}.
\end{cases}
\end{equation}
The second case in \eqref{eq:def_Y} can be interpreted as follows: if $X_t$ lies on the boundary of a Voronoi cell, then we assign to $Y_t$ the state $j\in I$, where $j$ is the index of the set whose interior contained the trajectory of $Y$ in the most recent past, i.e. up to but not including the current time $t$. The memory effect implied by the second case in \eqref{eq:def_Y} of $Y$ implies that $Y$ is not Markovian. 

In order to define the analogues of the sets $A$ and $B$ in the discrete state space $I$, we make the following assumption.
\begin{assumption}
\label{assumption:JK_models_AB}
Given the sets $A,B\subset S$, there exist disjoint subsets $J,K\subset I$ satisfying $\clos{A}=\cup_{j\in J}S_j$ and $\clos{B}=\cup_{k\in K}S_k$.  
\end{assumption}
The assumption implies that $J$ and $K$ are the `metastable subsets' for the jump process $Y$. The assumption effectively amounts to making the definition of $A$ and $B$ dependent on the partition $\{S_i\}_{i\in I}$. Although this may appear to be problematic at first, we can justify it by both theoretical and practical considerations. The theoretical consideration is that we are interested in the continuum limit, in which case the cells $\{S_i\}_{i\in I}$ shrink to points. Thus, even if the assumption is not satisfied for a particular value of the width $\rho$, it will be satisfied in the continuum limit, regardless of the geometry of $A$ or $B$. The practical consideration is that in many applications, there is some flexibility in the definition of the metastable sets $A$ and $B$ in state space. For example, in molecular dynamics, one may choose $A$ and $B$ to each be a set of sufficiently small diameter that contains a local minima of the energy landscape. The precise geometry of each set is less important relative to the preservation of the metastability property. Thus it is reasonable to adapt the definitions of $A$ and $B$ according to the sets in the discretisation, since this facilitates the determination of when a process has entered a metastable set.

Recall the definition of the first hitting time of $X$ with respect to the set $A\cup B$,
\begin{equation}
\label{eq:tau_AB}
\tau_{A\cup B}(X):= \inf \{ t\geq 0\ :\  X_t\in A\cup B \}.
\end{equation}
Define the first hitting time of the jump process $Y$ with respect to the set $J\cup K$ as
\begin{equation}
\label{eq:tau_JK}
 \tau_{J\cup K}(Y) := \inf\{t\geq 0 \ :\ Y_t \in J\cup K\}.
\end{equation}
In practice, it is not possible to detect exactly when $Y$ hits $J$ or $K$, because numerical methods for generating trajectories of the underlying process $X$ will produce only approximations of the trajectories of $X$, e.g. by linear interpolation between points generated by the Euler-Maruyama method. Thus, using a numerically generated trajectory of $X$ in \eqref{eq:def_Y} will generate only an approximation of $Y$. However, for the sake of simplicity, we shall not take into account the error incurred by these approximations, and we shall assume that we can detect when $Y$ hits $J$ or $K$. 

The preceding definitions lead to the following result.
\begin{lemma}
\label{lemma:tau_AB_equals_tau_JK}
Suppose that Assumption \ref{assumption:JK_models_AB} holds, and let $\tau_{A\cup B}(X)$ and $\tau_{J\cup K}(Y)$ be defined as in \eqref{eq:tau_AB} and \eqref{eq:tau_JK} respectively. Then $\tau_{A\cup B}(X)=\tau_{J\cup K}(Y)$.
\end{lemma}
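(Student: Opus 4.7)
The plan is to prove equality of the two hitting times by proving both inequalities, relying on a single preliminary observation that I would establish at the outset: Assumption \ref{assumption:JK_models_AB} forces $\inter{S_j}\subset A$ for every $j\in J$ and $\inter{S_k}\subset B$ for every $k\in K$. Indeed, $\inter{S_j}$ is an open subset of $\clos{A}=\cup_{j'\in J}S_{j'}$, and since distinct Voronoi cells meet only on their boundaries, $\inter{S_j}$ lies in $\inter{\clos{A}}=A$ (using that $A$ is open and regular). Conversely, if $X_t\in\inter{S_i}$ and $X_t\in A$, then $i\in J$, because $A\subset\cup_{j\in J}S_j$ and $\inter{S_i}$ meets no other cell.

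For the direction $\tau_{J\cup K}(Y)\leq \tau_{A\cup B}(X)$, I would fix $t\geq 0$ with $X_t\in A\cup B$, say $X_t\in A$. If $X_t\in\inter{S_i}$ for some $i$, the observation forces $i\in J$, so the first branch of \eqref{eq:def_Y} gives $Y_t=i\in J$. Otherwise $X_t$ lies on a cell boundary; openness of $A$ and path-continuity of $X$ give some left neighbourhood of $t$ on which $X_s\in A$, so whichever index $j$ is selected by the second branch of \eqref{eq:def_Y} satisfies $\inter{S_j}\cap A\neq\emptyset$, hence $j\in J$ and again $Y_t\in J\cup K$. Thus $\tau_{J\cup K}(Y)\leq t$, and taking the infimum over admissible $t$ yields the inequality. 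For the reverse $\tau_{A\cup B}(X)\leq \tau_{J\cup K}(Y)$, I would fix $t\geq 0$ with $Y_t\in J\cup K$ and run the same logic in reverse: in the first branch of \eqref{eq:def_Y}, $X_t\in\inter{S_i}\subset A\cup B$ directly, so $\tau_{A\cup B}(X)\leq t$; in the second branch, $X_s\in\inter{S_j}\subset A\cup B$ for all $s\in(t-\epsilon,t)$, giving $\tau_{A\cup B}(X)\leq s<t$. Taking the infimum then delivers the matching inequality.

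The main obstacle is really a bookkeeping subtlety in the boundary case. The second branch of \eqref{eq:def_Y} tacitly assumes that whenever $X_t$ lies on a cell boundary, the path has spent a full left neighbourhood of $t$ inside the interior of a single cell $S_j$. For a diffusion this is generically true, but at cell vertices or common boundaries of three or more cells it may fail, and one must either argue pathwise (exploiting that the set of such exceptional times has zero Lebesgue measure along a continuous path) or invoke an almost-sure statement for the underlying diffusion. Once this measurability/well-definition point is handled, both inequalities collapse onto the elementary containment $\inter{S_j}\subset A$ established at the start, and the equality of hitting times follows.
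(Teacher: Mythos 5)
Your argument is correct and reaches the same conclusion by a slightly different route. The paper works pathwise at the single time $t=\tau_{A\cup B}(X(\omega))$: it identifies the cell $S_i$ occupied on a left neighbourhood $(t-\epsilon_1,t)$, notes $Y_t=i$ and $Y_s\in J$ for $s\in(t,t+\epsilon_2)$, and concludes $\tau_{J\cup K}(Y)=t$ from $\inf\{s: t<s<t+\epsilon_2\}=t$. You instead prove the two inequalities separately from the containments $\inter{S_i}\subset A$ for $i\in J$ (and its converse), which reduces everything to the elementary inclusion of hitting sets. Your decomposition is arguably tidier on one point: the paper's final step only establishes $\tau_{J\cup K}(Y)\le t$ and silently uses that $Y_s\notin J\cup K$ for $s<t$ to upgrade this to equality; that missing half is exactly your second inequality, which you prove explicitly via $\inter{S_j}\subset A\cup B$. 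The price you pay is the hypothesis $\inter{\clos{A}}=A$ (regularity of $A$), which the paper does not state but which is implicitly needed in its argument as well, and which is harmless under Assumption \ref{assumption:JK_models_AB}. Finally, the ``obstacle'' you flag --- that the second branch of \eqref{eq:def_Y} presumes the path sits in the interior of a single cell throughout some left neighbourhood of a boundary time --- is a genuine well-definedness issue of $Y$ itself, not of your proof; the paper's proof makes exactly the same tacit assumption when it writes ``let $i\in I$ be such that $X_s(\omega)\in\inter{S_i}$ for $s\in(t-\epsilon_1,t)$,'' so you are not worse off, and your suggestion to handle it by an almost-sure statement for the diffusion is the right fix for both arguments.
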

\begin{proof}
Fix an arbitrary sample trajectory $X(\omega)$ of $X$, and let $t:=\tau_{A\cup B}(X(\omega))$. Given that $A$ and $B$ are open, and given the definition \eqref{eq:tau_AB} of $\tau_{A\cup B}(X)$, it follows that there exist $\epsilon_1,\epsilon_2>0$ such that $X_s(\omega)\notin A\cup B$ for $s\in (t-\epsilon_1,t)$, $X_t(\omega)\in\partial A\cup \partial B$, and $X_s(\omega)\in A\cup B$ for $s\in (t,t+\epsilon_2)$. 

Suppose first that $X_t(\omega)\in\partial A$ and $X_s(\omega)\in A$ for $s\in (t,t+\epsilon_2)$. Let $i\in I$ be such that $X_s(\omega)\in\inter{S_i}$ for $s\in (t-\epsilon_1,t)$. By the definition \eqref{eq:def_Y} of $Y$, it follows that $Y_s(\omega)=i$ for $s\in (t-\epsilon_1,t)$ and $Y_t(\omega)=i$. Furthermore, given Assumption \ref{assumption:JK_models_AB}, $X_s(\omega)\in A$ for $s\in (t,t+\epsilon_2)$ implies that $Y_s(\omega)\in J$ for $s\in (t,t+\epsilon_2)$. Since $\inf\{s\ : t<s<t+\epsilon_2\}=t$, it follows that $\tau_{J\cup K}(Y)=t=\tau_{A\cup B}(X)$. If instead $X_t(\omega)\in\partial B$ and $X_s(\omega)\in B$ for $s\in (t,t+\epsilon_2)$, then the same argument holds after replacing $J$ with $K$. This completes the proof.
\end{proof}

Except for $\tau_{J\cup K}(Y)$, our method does not involve computing or estimating any properties of the non-Markovian jump process $Y$, such as its transition probabilities. We introduce the process $Y$ only to help the interpretation of the approximate committor that we define in \S\ref{ssec:approximate_committor}. 

\section{Definition and convergence analysis of approximate TPT objects}
\label{sec:convergence_analysis}

In this section we define the approximate TPT objects that are the outputs of our method. We prove error bounds for each object and use these bounds to show that in the continuum limit, each approximate TPT object we define converges to the corresponding object for the underlying process $X$. For simplicity, in the error bounds we do not take into account statistical errors, i.e. errors due to the use of finite-sample Monte Carlo averages as approximations for expected values. 

\subsection{Approximate committor}
\label{ssec:approximate_committor}

Recall the definition of the committor of $X$,
\begin{equation}
\label{eq:forward_committor}
q:S\to [0,1],\quad    x\mapsto q(x):= \mathbb{P}(X_{\tau_{A\cup B}(X)}\in B\vert X_0 = x).
\end{equation}
Recall that $K$ is the subset of discrete state space $I$ corresponding to the metastable subset $B$ of the continuous state space $S$. We define the approximate committor in an analogous way, using the jump process $Y$ defined in \eqref{eq:def_Y}. For $i\in I$, define $\tilde{q}_i$ by
\begin{equation}
\label{eq:approximate_committor_ith_component}
	\tilde{q}_i := \mathbb{P}\left( Y_{\tau_{J\cup K}(Y)}\in K\ \middle\vert Y_0= i \right).
\end{equation} 
 In order to compute each $\tilde{q}_i$ using sample trajectories of $X$, we use a Monte Carlo approach: we sample finitely many trajectories, where the initial condition of each trajectory is an independent draw from the uniform distribution on $S_i$, and compute the empirical probability that a trajectory starting from $S_i$ enters $B$ before $A$. 
 
We define the approximate committor on the state space $S$ of the underlying process $X$ as a function that is piecewise constant on the interiors of the cells $\{S_i\}_{i\in I}$ of the tessellation:
\begin{equation}
\label{eq:approximate_committor_func}
 \tilde{q}(x):=
 \begin{cases}
  \tilde{q}_i & x\in \inter{S_i}
  \\
  a & x\notin \cup_{i\in I}\inter{S_i},
 \end{cases}
\end{equation}
where we may choose an arbitrary $a\in\bb{R}$, since $(\cup_{i\in I}\inter{S_i})^\complement$ has Lebesgue measure zero and since we will measure the error of $\hat{q}$ with respect to $q$ in the $L^p(\mu)$ norm. Up to a redefinition of the values on $(\cup_{i\in I}\inter{S_i}))^\complement$, the approximate committor $\tilde{q}$ can be interpreted as a committor for $Y$, because the definition \eqref{eq:approximate_committor_func} involves the $\tilde{q}_i$'s from \eqref{eq:approximate_committor_ith_component}.

Recall that the ergodic measure $\mu$ is assumed to have a strictly positive Lebesgue density on $S$. Define the $\mu$-weighted inner product on $L^2(S,\mu;\bb{R})$ by $\langle v,w \rangle_{\mu} = \int_S v(x)w(x)\mu(\mathrm{d}x)$. Given a tessellation $\{S_i\}_{i\in I}$, we use the weighted inner product to define $\hat{q}_i$ for each $i\in I$ by
\begin{equation}
\label{proj_committor}
	\hat{q}_i := \frac{1}{\mu(S_i)}\langle q,\mathbf{1}_{S_i} \rangle_{\mu}.
\end{equation} 
Since each $S_i$ has positive Lebesgue measure, it follows that $\mu(S_i)>0$ for each $i\in I$. We can define the projected committor function $\hat{q}:S\to [0,1]$ analogously to \eqref{eq:approximate_committor_func}:
\begin{equation}
\label{projected_committor_func}
\hat{q}(x):=\begin{cases}
  \hat{q}_i & x\in \inter{S_i}
  \\
  b & x\notin \cup_{i\in I}\inter{S_i},
 \end{cases}
\end{equation}
for a fixed, arbitrary $b\in\bb{R}$. With these preparations, we may state the following proposition. The proof uses some technical lemmas that will not be used in the sequel, so we defer the proof to \S\ref{ssec:appendix_committor}.

\begin{proposition} 
\label{proposition_committors_equal}
Suppose that Assumption \ref{assumption:JK_models_AB} holds. Let $\hat{q}_i$ and $\tilde{q}_i$ be defined as in \eqref{proj_committor} and \eqref{eq:approximate_committor_ith_component} respectively. If $\bb{P}\circ (X_0)^{-1}=\mu$, then $\hat{q}_i = \tilde{q}_i$, for all $i\in I$. 	
\end{proposition}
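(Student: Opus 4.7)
The plan is to combine Lemma \ref{lemma:tau_AB_equals_tau_JK} with the tower property of conditional expectation, after a careful bookkeeping step relating the events $\{Y_0=i\}$ and $\{X_0\in S_i\}$ under the initial distribution $\mu$.

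First I would observe that the argument in the proof of Lemma \ref{lemma:tau_AB_equals_tau_JK} actually shows more than the equality of hitting times: on every sample path for which $\tau_{A\cup B}(X)<\infty$, one also has $X_{\tau_{A\cup B}(X)}\in B$ if and only if $Y_{\tau_{J\cup K}(Y)}\in K$. Indeed the argument splits on whether $X$ first hits $\partial A$ or $\partial B$, and by Assumption \ref{assumption:JK_models_AB} a first hit of $\partial B$ forces $Y$ to jump into $K$, while a first hit of $\partial A$ forces $Y$ to jump into $J$. So the events $\{Y_{\tau_{J\cup K}(Y)}\in K\}$ and $\{X_{\tau_{A\cup B}(X)}\in B\}$ agree almost surely.

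Next I would handle the relationship between $X_0$ and $Y_0$. Since $\mu$ admits a Lebesgue density and the boundaries $\partial S_i$ have Lebesgue measure zero, the hypothesis $\mathbb{P}\circ (X_0)^{-1}=\mu$ implies $\mathbb{P}(X_0\in\partial S_i)=0$ for every $i\in I$. Therefore, almost surely, $X_0\in\inter{S_i}$ for exactly one $i$, and by the first line of \eqref{eq:def_Y} we have $\{Y_0=i\}=\{X_0\in\inter{S_i}\}$ up to a $\mathbb{P}$-null set. Consequently $\mathbb{P}(Y_0=i)=\mu(S_i)>0$, and the regular conditional distribution of $X_0$ given $Y_0=i$ is the restriction of $\mu$ to $S_i$ normalised by $\mu(S_i)$.

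With these two ingredients in hand, the proof is essentially the tower property. I would write
\begin{align*}
\tilde{q}_i
&=\mathbb{P}\bigl(Y_{\tau_{J\cup K}(Y)}\in K\,\big|\,Y_0=i\bigr)
=\mathbb{P}\bigl(X_{\tau_{A\cup B}(X)}\in B\,\big|\,Y_0=i\bigr) \\
&=\mathbb{E}\!\left[\mathbb{P}\bigl(X_{\tau_{A\cup B}(X)}\in B\,\big|\,X_0\bigr)\,\Big|\,Y_0=i\right]
=\mathbb{E}[q(X_0)\mid Y_0=i] \\
&=\frac{1}{\mu(S_i)}\int_{S_i}q(x)\,\mu(\mathrm{d}x)=\hat{q}_i,
\end{align*}
where the second equality uses the first step, the third uses the strong Markov / conditioning identity together with the fact that $Y_0$ is a deterministic function of $X_0$ up to a null set, and the penultimate equality uses the conditional distribution identified in the previous paragraph.

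The only real obstacle is being precise about the null sets: one must verify that the event on which $X_0$ lies on some cell boundary, or on which $\tau_{A\cup B}(X)=+\infty$, does not spoil the identification $\{Y_0=i\}=\{X_0\in S_i\}$ and the measurability needed for conditioning. Both points follow from ergodicity (giving $\tau_{A\cup B}(X)<\infty$ a.s.) and from $\mu$ having a Lebesgue density (killing the cell boundaries). Once these are dispatched the chain of equalities above is routine.
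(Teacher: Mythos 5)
Your proof is correct and follows essentially the same route as the paper: both reduce $\tilde{q}_i$ to $\mathbb{P}\bigl(X_{\tau_{A\cup B}(X)}\in B,\ X_0\in S_i\bigr)/\mu(S_i)$ via Lemma \ref{lemma:tau_AB_equals_tau_JK} and the identification of $\{Y_0=i\}$ with $\{X_0\in S_i\}$, and then disintegrate over the initial condition to obtain $\frac{1}{\mu(S_i)}\int_{S_i}q\,\mathrm{d}\mu=\hat{q}_i$. The only difference is presentational: the paper packages your tower-property step as explicit statements about regular conditional distributions (Lemmas \ref{lemma:committors_are_rcp} and \ref{lemma:RCD_for_committors}), whereas you invoke the tower property directly and make explicit the null-set bookkeeping on cell boundaries that the paper leaves implicit.
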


The following lemma uses the convexity of Voronoi cells and the continuity of the committor function to prove that every Voronoi cell $S_i$ contains a point $x_i\in S_i$ such that $q$ attains the value $\hat{q}_i$ at $x_i$. We shall use this lemma later to prove Theorem \ref{theorem:committor_conv}.
\begin{lemma}
\label{lemma:existence_of_xi}
 Let $\{S_i\}_{i\in I}$ be a Voronoi tessellation of $S$, and let $\hat{q}_i$ be defined as in \eqref{proj_committor}. For every $i\in I$, there exists some $x_i\in S_i$ such that $q(x_i)=\hat{q}_i$.
\end{lemma}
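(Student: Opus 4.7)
The plan is to recognize that $\hat{q}_i$ is a $\mu$-weighted average of $q$ over $S_i$ and then invoke the intermediate value theorem on the connected set $S_i$. The main ingredients are continuity of $q$ on $\clos{S}$ and connectedness/compactness of each Voronoi cell $S_i$.

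First I would record that each $S_i$ is compact (it is closed and bounded as a $d$-polytope in the bounded set $\clos{S}$) and connected (being convex). Next I would argue that the committor $q$, extended by $0$ on $A$ and $1$ on $B$, is continuous on $\clos{S}$: in the interior of $S\setminus(A\cup B)$ this is given by its $C^2$ regularity as the solution of the Dirichlet problem cited after equation (10) of \cite{weinan2006towards}, and continuity up to $\partial A$ and $\partial B$ is the Dirichlet boundary condition; on $\clos{A}$ and $\clos{B}$ continuity is immediate from the constant extension. Thus $q|_{S_i}$ is a continuous function on a connected compact set, so its image is an interval $[m_i, M_i]$ with $m_i := \min_{x\in S_i} q(x)$ and $M_i := \max_{x\in S_i} q(x)$.

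Then I would bound the weighted average. Since $\mu(S_i) > 0$ (strict positivity of the Lebesgue density of $\mu$ together with positive Lebesgue measure of $S_i$) and $m_i \le q(x) \le M_i$ for every $x \in S_i$, integrating against $\mu$ and dividing by $\mu(S_i)$ yields $m_i \le \hat{q}_i \le M_i$. Applying the intermediate value theorem to the continuous function $q$ on the connected set $S_i$ then produces an $x_i \in S_i$ with $q(x_i) = \hat{q}_i$, which is the desired conclusion. The cases $i \in J$ and $i \in K$ are trivial because $q \equiv 0$ or $q \equiv 1$ on $S_i$ under Assumption \ref{assumption:JK_models_AB}, and then $\hat{q}_i$ takes the same constant value, so any point of $S_i$ works.

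The main obstacle is the justification of continuity of $q$ across $\partial A \cup \partial B$, where $q$ is defined as a solution to a Dirichlet problem on the exterior and extended by constants on the interior; this hinges on the regularity of the boundaries of $A$ and $B$ implicit in the setup. Beyond that, every step is a direct appeal to the intermediate value theorem, convexity of Voronoi cells, and positivity of $\mu$.
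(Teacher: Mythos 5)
Your proof is correct and follows essentially the same route as the paper's: both arguments rest on the continuity of $q$, the convexity (hence connectedness) of each Voronoi cell, the observation that the $\mu$-weighted average $\hat{q}_i$ lies between the minimum and maximum of $q$ on $S_i$, and the intermediate value theorem. Your explicit bound $m_i\le\hat{q}_i\le M_i$ is in fact a slightly cleaner way to justify the step the paper states as ``$S_i^-$ and $S_i^+$ are nonempty,'' but this is a presentational difference, not a different argument.
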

\begin{proof}
If $q$ is constant on $S_i$, then it must equal $\hat{q}_i$, and any $x_i\in S_i$ satisfies the desired property. Therefore, suppose that $q$ is not constant on $S_i$, and partition $S_i$ into the disjoint subsets $S^-_i := \{x\in S_i\ :\ q(x)< \hat{q}_i\}$, $S^+_i := \{x\in S_i\ :\ q(x)> \hat{q}_i\}$ and $S^0_i := \{x\in S_i\ :\ q(x)= \hat{q}_i\}$. Since $q$ is continuous and not constant on $S_i$, it follows that $S^-_i$ and $S^+_i$ are nonempty. Let $x'\in S^-_i$ and $x''\in S^+_i$. It follows from the intermediate value theorem that there exists a $t\in(0,1)$ such that $x_i(t) := (1-t)x' + tx''$ satisfies $q(x_i(t)) = \hat{q}_i$. Since $x',x''\in S_i$ and since any Voronoi cell $S_i$ is convex, it follows that $x_i(t)$ belongs to $S_i$. 
\end{proof}
\begin{remark}
 The conclusion of Lemma \ref{lemma:existence_of_xi} holds for more general partitions $\{S_i\}_{i\in I}$ of state space $S$ for which each $S_i$ is path connected, since in this case we can replace the line segment $x_i(t):=(1-t)x'+tx''$ with a curve in $S_i$ with endpoints $x'$ and $x''$ and follow the same reasoning thereafter.
\end{remark}

We prove an error bound for the error incurred when we approximate the true committor $q$ with the projected committor $\hat{q}$ defined in \eqref{projected_committor_func}.

\begin{theorem}[Error bound for projected committor]
\label{theorem:committor_conv} 
Suppose that the committor $q:S\to [0,1]$ is globally Lipschitz continuous and has bounded second-order derivatives. If $\{S_i\}_{i\in I}$ is a Voronoi tessellation with width $\rho\leq 1$, then for any $p\geq 1$,
\begin{equation*}
\Vert q- \hat{q} \Vert_{L^p(\mu)}\leq C\rho,
\end{equation*}
where $C>0$ depends only on $q$.
\end{theorem}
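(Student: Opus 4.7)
The plan is to combine Lemma \ref{lemma:existence_of_xi} with the Lipschitz property of $q$ and the width bound on the Voronoi cells, and then integrate cell by cell.

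First, I would fix an arbitrary cell $S_i$ and an arbitrary point $x\in\inter{S_i}$. By Lemma \ref{lemma:existence_of_xi}, there exists $x_i\in S_i$ with $q(x_i)=\hat{q}_i$, so that $\hat{q}(x)=\hat{q}_i=q(x_i)$. Using the global Lipschitz continuity of $q$ together with the fact that $\abs{x-x_i}_2\leq \diam{S_i}\leq \rho$ (since $x,x_i\in S_i$ and the width of the tessellation is $\rho$), I would obtain the pointwise estimate
\begin{equation*}
 \abs{q(x)-\hat{q}(x)} = \abs{q(x)-q(x_i)} \leq \Vrt{q}_{\Lip}\, \abs{x-x_i}_2 \leq \Vrt{q}_{\Lip}\,\rho.
\end{equation*}
This estimate is uniform in $x\in \inter{S_i}$ and uniform in $i\in I$.

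Next, I would raise the pointwise bound to the $p$-th power and integrate against $\mu$. Since the set $(\cup_{i\in I}\inter{S_i})^\complement$ has Lebesgue measure zero and $\mu$ has a Lebesgue density, this set has $\mu$-measure zero and thus contributes nothing to the $L^p(\mu)$ norm, regardless of the arbitrary constant $b$ used in \eqref{projected_committor_func}. Therefore
\begin{equation*}
 \Vrt{q-\hat{q}}_{L^p(\mu)}^p = \sum_{i\in I}\int_{\inter{S_i}} \abs{q(x)-\hat{q}_i}^p\, \mu(\mathrm{d}x) \leq \Vrt{q}_{\Lip}^p\, \rho^p \sum_{i\in I}\mu(\inter{S_i}) = \Vrt{q}_{\Lip}^p\, \rho^p,
\end{equation*}
where the last equality uses that $\mu$ is a probability measure and that the cells partition $S$ up to a $\mu$-null set. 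Taking $p$-th roots yields the claimed bound with $C:=\Vrt{q}_{\Lip}$, which indeed depends only on $q$.

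There is no real obstacle here: the restriction $\rho\leq 1$ stated in the theorem does not appear to be needed for this bound, and the hypothesis of bounded second-order derivatives of $q$ is also not used at this stage (it will presumably matter in the subsequent error analysis for the probability current, where Taylor expansion to second order is the natural tool). The only slightly subtle point is the measure-zero boundary set, but this is handled trivially because $\mu\ll\text{Leb}$. Thus the proof is essentially immediate once Lemma \ref{lemma:existence_of_xi} is in hand.
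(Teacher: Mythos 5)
Your proof is correct, and it takes a genuinely simpler route than the paper's. Both arguments hinge on Lemma \ref{lemma:existence_of_xi} to replace $\hat{q}_i$ by $q(x_i)$ for some $x_i\in S_i$, but from that point the paper proceeds by inserting and removing the first-order Taylor term $\langle\nabla q(x_i),x-x_i\rangle$, bounding the remainder by $H\abs{x-x_i}_2^2\leq H\abs{x-x_i}_2$ (this is where the hypotheses $\rho\leq 1$ and bounded second-order derivatives enter) and the linear term by Cauchy--Schwarz and the Lipschitz constant, then recombining via $(a+b)^p\leq 2^{p-1}(a^p+b^p)$; this yields the constant $C=2(\Vrt{q}_{\Lip}+H)$. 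You instead apply the global Lipschitz bound directly to $\abs{q(x)-q(x_i)}$, which gives the sharper constant $C=\Vrt{q}_{\Lip}$ and, as you correctly observe, makes both the restriction $\rho\leq 1$ and the bounded-second-derivative hypothesis superfluous for this particular theorem. Your handling of the $\mu$-null boundary set and the summation over cells is also fine. In short, your argument proves a slightly stronger statement (same conclusion under weaker hypotheses, with a better constant) by bypassing an unnecessary detour in the paper's proof.
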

\begin{proof}
Recall that the committor $q$ is continuously differentiable in $S\setminus (A\cup B)$, since it solves a backward Kolmogorov equation. Given the assumption that $q$ is globally Lipschitz continuous, it follows that $\sup_{x\in S}\abs{\nabla q(x)}\leq \Vrt{q}_{\Lip}$. In addition, given the assumption of bounded second-order derivatives, there exists some $H>0$ that depends only on $q$, such that for all $x,y\in S$ with $\abs{x-y}_2\leq \rho$ and $\rho\leq 1$, 
\begin{equation}\label{taylor_ineq}
    \abs{q(x)-q(y)- \langle \nabla q(x),x-y\rangle} \leq H\abs{x-y}^2_2\leq H\abs{x-y}_2.
\end{equation}
Fix an arbitrary $p\in [1,\infty)$, and fix an arbitrary $i\in I$. By Lemma \ref{lemma:existence_of_xi}, there exists an $x_i\in S_i$ such that $q(x_i)=\hat{q}_i=\hat{q}\vert_{S_i}$. We obtain
	\begin{align*}
	\Vert \left(q - \hat{q}\right)|_{S_i} \Vert^p_{L^p(\mu)} \nonumber &= \int_{S_i} \abs{q(x) - \hat{q}(x)}^p \mu(\mathrm{d}x) = \int_{S_i} \abs{q(x) - q(x_i)}^p \mu(\mathrm{d}x) 
	\\
	\nonumber &= \int_{S_i} \abs{ q(x)-q(x_i) -\langle \nabla q(x_i),x-x_i\rangle + \langle \nabla q(x_i),x-x_i\rangle}^p\mu(\mathrm{d}x)
	\\
	\nonumber &\leq 2^{p-1}\left( \int_{S_i}\left(H\abs{x - x_i}_2 \right)^p\mu(
\mathrm{d}x) + \int_{S_i}\abs{\langle \nabla q(x_i),x-x_i\rangle}^p\mu(\mathrm{d}x) \right)
		\\
		\nonumber &\leq 2^{p-1}\left(\int_{S_i}\left(H\abs{x - x_i}_2 \right)^p\mu(\mathrm{d}x)+\int_{S_i} \left(\Vrt{q}_{\Lip}\abs{x-x_i}_2\right)^p\mu(\mathrm{d}x) \right)
		\\
		\nonumber &\leq 2^{p-1}\rho^p\mu(S_i)\left(H^p  + \Vrt{q}_{\Lip}^p \right).
	\end{align*}
	Above, we used \eqref{taylor_ineq} and $(a+b)^p\leq 2^{p-1}(a^p+b^p)$ in the first inequality, the global Lipschitz continuity of $q$ and the Cauchy-Schwarz inequality in the second inequality, and the assumption that $\{S_i\}_{i\in I}$ has width $\rho$ and the fact that $x,x_i\in S_i$ with $\diam{S_i}\leq \rho$ in the third inequality. Summing over $i\in I$, we obtain
	\begin{align*}
	\Vert q-\hat{q}\Vert_{L^p(\mu)}^p &=\sum_{i\in I}\Vert (q-\hat{q})\vert_{S_i}\Vert_{L^p(\mu)}^p\leq (2\rho(\Vrt{q}_{\Lip}+H))^p
	\end{align*}
    since $\mu(S)=1$, and since $a^p + b^p \leq (a+b)^p$, for $a,b \geq 0$ and $p\geq 1$.
\end{proof}
\begin{corollary}[Error bound for discrete committor]
\label{corollary:approximate_committor_convergence}
Suppose that Assumption \ref{assumption:JK_models_AB} and the assumptions of Theorem \ref{theorem:committor_conv} hold. Then for any Voronoi tessellation $\{S_i\}_{i\in I}$ of $S$ with sufficiently small width $\rho$, the function $\tilde{q}$ defined in \eqref{eq:approximate_committor_func} satisfies
\begin{equation*}
 \Vert q-\tilde{q}\Vert_{L^p(\mu)}\leq C\rho
\end{equation*}
for the same $C>0$ as in the conclusion of Theorem \ref{theorem:committor_conv}.
\end{corollary}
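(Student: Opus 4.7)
The plan is a direct synthesis of the preceding results: Theorem \ref{theorem:committor_conv} already gives the desired bound for the projected committor $\hat{q}$, and Proposition \ref{proposition_committors_equal} identifies $\hat{q}$ with $\tilde{q}$ (up to a null set) under Assumption \ref{assumption:JK_models_AB}. So the bound on $\|q - \tilde{q}\|_{L^p(\mu)}$ will follow by substitution, with no new estimation.

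First, I would justify the phrase ``sufficiently small width.'' Theorem \ref{theorem:committor_conv} already requires $\rho \leq 1$, so I take $\rho$ at most $1$. Assumption \ref{assumption:JK_models_AB} requires $\clos{A}$ and $\clos{B}$ to each be exactly a union of Voronoi cells; as the authors note after stating the assumption, this can always be arranged by mildly adjusting $A$ and $B$ once $\rho$ is small enough compared to the geometry of the original metastable sets. Under this restriction on $\rho$, both hypotheses of Proposition \ref{proposition_committors_equal} and Theorem \ref{theorem:committor_conv} are in force.

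Next, I would apply Proposition \ref{proposition_committors_equal}: for every $i \in I$, $\hat{q}_i = \tilde{q}_i$. Comparing the definitions \eqref{eq:approximate_committor_func} of $\tilde{q}$ and \eqref{projected_committor_func} of $\hat{q}$, these two piecewise-constant functions therefore coincide on $\bigcup_{i \in I} \inter{S_i}$ and may differ only on the residual set $\bigl(\bigcup_{i \in I} \inter{S_i}\bigr)^{\complement}$, which is a finite union of cell boundaries and hence has Lebesgue measure zero. Since $\mu$ admits a Lebesgue density, this residual set is also $\mu$-null, so $\tilde{q} = \hat{q}$ $\mu$-almost everywhere.

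Finally, combining this identification with Theorem \ref{theorem:committor_conv},
\begin{equation*}
\Vrt{q - \tilde{q}}_{L^p(\mu)} = \Vrt{q - \hat{q}}_{L^p(\mu)} \leq C\rho,
\end{equation*}
with exactly the same constant $C$ depending only on $q$. The only subtlety worth flagging is that Proposition \ref{proposition_committors_equal} is stated under the hypothesis $\mathbb{P} \circ (X_0)^{-1} = \mu$, whereas the objects $q$ and $\tilde{q}_i$ in the corollary are intrinsic to the dynamics (defined via conditioning on $X_0 = x$ or $Y_0 = i$) and do not depend on an overall law for $X_0$; this is not an obstacle, merely a point to mention, since one may freely postulate $X_0 \sim \mu$ to invoke the proposition without altering either $q$ or the $\tilde{q}_i$.
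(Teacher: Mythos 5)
Your proof is correct and is essentially the paper's own argument, which simply combines Theorem \ref{theorem:committor_conv} with Proposition \ref{proposition_committors_equal} (the $\mu$-a.e.\ identification of $\tilde{q}$ with $\hat{q}$ away from the null set of cell boundaries). Your only imprecise side-remark is that $\tilde{q}_i$ is ``intrinsic'': it does depend on the conditional law of $X_0$ on $S_i$, so the hypothesis $\bb{P}\circ(X_0)^{-1}=\mu$ is genuinely needed to identify $\tilde{q}_i$ with the $\mu$-average $\hat{q}_i$, but since the corollary is implicitly read under that convention this does not affect the validity of the argument.
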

\begin{proof}
 The result follows from Theorem \ref{theorem:committor_conv} and Proposition \ref{proposition_committors_equal}.
\end{proof}

\subsection{Approximate probability current}
\label{ssec:probability_current}

In this section we define the second output of our method, namely the approximate probability current $\tilde{J}_{AB}$. We describe how $\tilde{J}_{AB}$ is obtained from sample reactive trajectories of $X$. 

To define the approximate probability current, we first recall the definition of the probability current from TPT for diffusion processes, from the presentation in \cite[Section 3.1.4]{metzner2008transition}. The probability current is a function $J_{AB} : S\setminus (A\cup B)\rightarrow \mathbb{R}^d$ such that at any point $x\in S\setminus(A\cup B)$, $J_{AB}(x)$ represents the net flux of reactive trajectories from $A$ to $B$ through that point. For any $d$-dimensional $C \subset S\setminus (A \cup B)$ with boundary $\partial C$, $J_{AB}$ is defined implicitly via
\begin{equation}
 \label{eq:probability_current_def}
\begin{split}
\lim_{s\rightarrow 0^+} \frac{1}{s}\lim_{T\rightarrow \infty}\frac{1}{T}\int_{\mathcal{R}\cap \left[ 0,T\right]}\mathbf{1}_{C}\left(X_t\right)\mathbf{1}_{{C}^\complement}\left(X_{t+s}\right) - \mathbf{1}_{{C}^\complement}\left(X_t\right)\mathbf{1}_{C}\left(X_{t+s}\right) \mathrm{d}t \\= \int_{\partial C}J_{AB}(y)\cdot n_{C}(y)\sigma_{C}(\mathrm{d}y)
\end{split}
\end{equation}
where $n_{C}(y)$ denotes the outward facing unit normal to $C$ at some point $y\in \partial C$, $\sigma_{C}$ denotes the surface measure on $\partial C$ and $\mathcal{R}$ indicates the set of reactive times, i.e. the union of time intervals in which the trajectory of the process $X$ is reactive. 

To motivate our definition of the approximate probability current, we first observe that if we set the region $C$ in the formula above to be a Voronoi cell or more generally, a polytope $S_i$, then the unit normal $n_C$ in the integral on the right-hand side of \eqref{eq:probability_current_def} takes only finitely many values that are given by the outer unit normals to the facets of $S_i$. In particular, we have
\begin{equation*}
\int_{\partial S_i}J_{AB}(y)\cdot n_{S_i}(y)\sigma_{S_i}(\mathrm{d}y) = \sum_{k\in \mathcal{N}(S_i)}\int_{\partial{S_i}\cap \partial{S_k}}J_{AB}(y)\cdot n_{ik}(y)\sigma_{\partial S_i \cap \partial S_k}(\mathrm{d}y), 
\end{equation*}
where $\mathcal{N}(S_i)$ denotes the set of indices of cells adjacent to $S_i$, i.e.
\begin{equation}
 \label{eq:neighbour_set}
 \mathcal{N}(S_i):=\{j\in I\ :\ \dim{S_i\cap S_j}=d-1\},
\end{equation}
and $n_{ik}$ is the outer unit normal vector to $S_i$ in the relative interior of the facet $\partial S_i\cap\partial S_k$. Thus we arrive at a decomposition over facets of the right-hand side of \eqref{eq:probability_current_def}. 

We can obtain a similar decomposition over facets of the left-hand side of \eqref{eq:probability_current_def}. Observe that if in a time interval of vanishing length a transition out of $C=S_i$ takes place, then almost surely with respect to normalised surface measure on $\partial S_i$, the transition is into an adjacent cell $S_k$. This is due to the almost sure continuity of sample paths of diffusion processes. Therefore, in the $s\to 0^+$ limit, $\mathbf{1}_{C^\complement}(X_{t+s})=\sum_{k\in\mathcal{N}(S_i)}\mathbf{1}_{S_k}(X_{t+s})$. Thus, if $C=S_i$ for arbitrary $i\in I$, then we can write \eqref{eq:probability_current_def} as a system of $\card{\mathcal{N}(S_i)}$ equations: for $k\in\mathcal{N}(S_i)$,
\begin{equation}\label{eq:alpha_ik_def}
 \begin{split}
\alpha_{ik}:=&\lim_{s\rightarrow 0^+} \frac{1}{s}\lim_{T\rightarrow \infty}\frac{1}{T}\int_{\mathcal{R}\cap \left[ 0,T\right]}\mathbf{1}_{S_i}\left(X_t\right)\mathbf{1}_{S_k}\left(X_{t+s}\right) - \mathbf{1}_{S_k}\left(X_t\right)\mathbf{1}_{S_i}\left(X_{t+s}\right) \mathrm{d}t 
\\
=& \int_{\partial{S_i}\cap \partial{S_k}}J_{AB}(y)\cdot n_{ik}\sigma_{\partial S_i \cap \partial S_k}(\mathrm{d}y).
\end{split}
\end{equation}
Following the interpretation of the probability current $J_{AB}(y)$ at $y$ as the net flux of reactive trajectories from $A$ to $B$ through $y$, we may interpret $\alpha_{ik}$ as the average net flux of reactive trajectories across the facet $S_i\cap S_k$.

We now describe how to obtain the approximate probability current. Under reasonable conditions, the probability current is continuous on its domain \cite[Section 3.1.4, Eq. (3.15)]{metzner2008transition}. If the probability current is continuous, then for sufficiently small $S_i$, we may replace the probability current $J_{AB}$ in the right-hand side of \eqref{eq:alpha_ik_def} with a constant vector $\tilde{J}_{AB,i}\in\bb{R}^d$. Since the choice of $\tilde{J}_{AB,i}$ should be consistent with \eqref{eq:alpha_ik_def}, the resulting expression yields the system of equations for the unknown $\tilde{J}_{AB,i}$:
\begin{equation}
\label{eq:hat_alpha_ik}
\hat{\alpha}_{ik}:=\frac{\alpha_{ik}}{\sigma(\partial{S_i} \cap \partial{S_k})} = \tilde{J}_{AB,i}\cdot n_{ik},\quad k\in\mathcal{N}(S_i).
\end{equation}
Note that in order to compute the volume $\sigma(\partial{S_i} \cap \partial{S_k})$ of the facet $\partial{S_i} \cap \partial{S_k}$, we need the vertices of the facet. We will return to this observation in \S\ref{sec:conclusion}. 

Let $\hat{\alpha}_i\in \mathbb{R}^{\card{\mathcal{N}(S_i)}}$ denote the vector with entries given by $\hat{\alpha}_{ik}$, and let $N_i\in\mathbb{R}^{\card{\mathcal{N}(S_i)}\times d}$ denote the matrix whose $k$-th row is given by $n_{ik}^\top$. Then we may rewrite the preceding system of equations as a matrix-vector product
\begin{equation}\label{eq:def_JAB_i_tilde}
	\hat{\alpha}_i =N_i\tilde{J}_{AB,i}\in \bb{R}^{\card{\mathcal{N}(S_i)}}.
\end{equation}
For \eqref{eq:def_JAB_i_tilde} to admit a solution $\tilde{J}_{AB,i}$, we need that $\hat{\alpha}_i$ belongs to the column space of $N_i$. Since $\card{\mathcal{N}(S_i)}\geq d+1$ (see Lemma \ref{lemma:d_polytopes_have_dplus1_facets} below) this need not hold in general. Therefore we consider the system of normal equations
\begin{equation}\label{eq:normal_equation}
M_i\tilde{J}_{AB,i}=\beta_i,\quad M_i:=N_i^\top N_i\in\bb{R}^{d\times d},\quad \beta_i:=N_i^\top \hat{\alpha}_i\in \bb{R}^{d}
\end{equation}
and use the fact that for any $d$-dimensional polytope in $\bb{R}^d$, the matrix $N_i$ of outer unit normals to $S_i$ has full rank (see Corollary \ref{corollary:rank_of_any_cell_is_d}). Thus $M_i$ is invertible, and we may solve \eqref{eq:normal_equation} using standard methods from numerical linear algebra. By solving \eqref{eq:normal_equation} for every $i\in I$, we obtain the approximate probability current $\tilde{J}_{AB}$ as a piecewise constant function on $S\setminus (A\cup B)$, by setting
\begin{equation}
\label{eq:approximate_probability_current_def}
   \tilde{J}_{AB}(x)=\tilde{J}_{AB,i(x)},\quad i(x):=\min\left\{\argmax\left\{\abs{\tilde{J}_{AB,i}}_2\ \middle\vert\ x\in S_i \right\}\right\}.
 \end{equation}
That is, if $x\in \inter{S_i}$ then $\tilde{J}_{AB}(x)=\tilde{J}_{AB,i}$, and if $x\in\partial S_i$ for certain $i\in I$, then we set $\tilde{J}_{AB}(x)$ to be the $\tilde{J}_{AB,i}$ with largest $\ell_2$ norm and choose the vector with smallest index if there are two or more vectors with the same $\ell_2$ norm.
 \begin{remark}
  \label{remark:approximate_probability_current_def_arbitrariness}
   For the error bound that we prove in this section, the values of the approximate probability current on the boundaries of cells are not important. This explains the arbitrary nature of the choice we made above. However, the property that the value is chosen from the set of approximate probability current vectors of cells whose boundaries contain $y$ ensures that the boundary values are reasonable.
 \end{remark}

We outline how to compute each entry of $\hat{\alpha}_{ik}$. Suppose we use a numerical method for generating trajectories of $X$ with time step $0<\Delta t\ll 1$, and suppose that we have generated one long trajectory of $X$ of duration $T:=M\Delta t$ for some $M\in\bb{N}$, $M\gg 1$. For a given $i\in I$ and $k\in\mathcal{N}(S_i)$, we compute $\hat{\alpha}_{ik}$ by taking the difference between the total number of observed one-step transitions from $S_i$ to $S_k$ and the total number of observed one-step transitions from $S_k$ to $S_i$, where we count only those one-step transitions that occur within reactive trajectory segments of the long trajectory. Since we cannot observe state transitions that occur over intervals of length shorter than $\Delta t$, we set $s=\Delta t$ and divide this difference by $T$ and $s$ as in \eqref{eq:alpha_ik_def}. Note that the $\Delta t$ term in $T=M\Delta t$ cancels the $\Delta t$ component arising from the $\mathrm{d}t$ term in the integral. The procedure just described yields $\alpha_{ik}$. We can compute the surface measure $\sigma(\partial S_i\cap\partial S_k)$ using methods described in \cite{bueler_enge_fukuda2000}, and then use \eqref{eq:hat_alpha_ik}.

Modulo statistical error due to insufficient sampling and the error due to the approximate volume computations needed for $\sigma(\partial S_i\cap \partial S_k)$, the computation method described above incurs errors due to the approximation of the $s\to 0^+$ limit and of the $T\to\infty$ limit by a fixed time step and duration respectively. These constraints are inevitable, given a finite computational budget. In particular, the error due to the approximation of the $T\to\infty$ limit is the finite sampling error that is common to all Monte Carlo methods.

For any real matrix $G$, we denote the smallest and largest nonzero singular value of $G$ respectively by $\sigma_{\min}(G)$ and $\sigma_{\max}(G)$. We define the smallest and largest singular value of a Voronoi cell $S_i$ as the smallest and largest singular value of the matrix $N_i$ of outer unit normals respectively:
\begin{equation}
\label{eq:extreme_singular_values_of_polytopes}
\sigma_{\min}(S_i):=\sigma_{\min}(N_i) = \sqrt{\sigma_{\min}(M_i)},\quad\sigma_{\max}(S_i):=\sigma_{\max}(N_i) = \sqrt{\sigma_{\max}(M_i)}.
\end{equation}
With this notation in mind, we can state and prove the following result. 
\begin{theorem}
 \label{theorem:J_AB_error_bound}
 Let $\{S_i\}_{i\in I}$ be a Voronoi tessellation of $S$ with width $\rho$ that satisfies Assumption \ref{assumption:JK_models_AB}. If the probability current $J_{AB}$ is globally Lipschitz, then 
	 \begin{equation}
	  \label{eq:error_bound_approx_J_AB_fixed_tessellation}
	  \Vrt{\tilde{J}_{AB}-J_{AB}}_{L^2(S\setminus (A\cup B),\mu;\bb{R}^d)}\leq C\Vrt{J_{AB}}_{\Lip}\rho,
	 \end{equation}
	 for $C=C(\{S_i\}_{i\in I}):=\sup_{i\in I}\sigma_{\max}(S_i)\sigma_{\min}^{-2}(S_i)\card{\mathcal{N}(S_i)}^{1/2}$.
\end{theorem}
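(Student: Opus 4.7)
The plan is to reduce the claim to a pointwise cell-wise bound $\abs{\tilde{J}_{AB,i}-J_{AB}(x)}_2 \leq C_i\,\Vrt{J_{AB}}_{\Lip}\rho$ for every $x\in S_i$, where $C_i:=\sigma_{\max}(S_i)\sigma_{\min}(S_i)^{-2}\card{\mathcal{N}(S_i)}^{1/2}$. Assumption \ref{assumption:JK_models_AB} guarantees that each Voronoi cell is either contained in or disjoint from $A\cup B$, so the squared $L^2(S\setminus(A\cup B),\mu;\bb{R}^d)$-error splits cleanly as a sum over cells $S_i$ disjoint from $A\cup B$ of $\int_{S_i}\abs{J_{AB}-\tilde{J}_{AB,i}}_2^{2}\,\mathrm{d}\mu$. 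Squaring the pointwise bound, integrating, using $\mu(S\setminus(A\cup B))\leq 1$, and taking $C:=\sup_i C_i$ then delivers \eqref{eq:error_bound_approx_J_AB_fixed_tessellation}.

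\textbf{Step 1: propagate Lipschitz continuity through the facet averages.} Fix any $i\in I$ and, crucially, any $x\in S_i$ (not a separately chosen reference point). The geometric key is that every $y\in\partial{S_i}\cap\partial{S_k}$ lies in $\clos{S_i}$, hence $\abs{y-x}_2\leq\diam{S_i}\leq\rho$. Combining this with the integral representation \eqref{eq:alpha_ik_def} of $\alpha_{ik}$, the definition \eqref{eq:hat_alpha_ik} of $\hat{\alpha}_{ik}$, the global Lipschitz hypothesis on $J_{AB}$, and $\abs{n_{ik}}_2=1$ produces the scalar estimate $\abs{\hat{\alpha}_{ik}-J_{AB}(x)\cdot n_{ik}}\leq \Vrt{J_{AB}}_{\Lip}\rho$ for each neighbour $k\in\mathcal{N}(S_i)$. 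Stacking these $\card{\mathcal{N}(S_i)}$ componentwise bounds into the Euclidean norm on $\bb{R}^{\card{\mathcal{N}(S_i)}}$ yields
\begin{equation*}
\abs{\hat{\alpha}_i - N_i J_{AB}(x)}_2 \leq \card{\mathcal{N}(S_i)}^{1/2}\Vrt{J_{AB}}_{\Lip}\rho.
\end{equation*}

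\textbf{Step 2: invert the normal equations.} By Corollary \ref{corollary:rank_of_any_cell_is_d}, $N_i$ has full column rank $d$, so $M_i$ is invertible and the map $v\mapsto M_i^{-1}N_i^\top(N_i v)$ is the identity on $\bb{R}^d$. Applying this to $v=J_{AB}(x)$ and subtracting from the defining identity \eqref{eq:normal_equation} of $\tilde{J}_{AB,i}$ produces the representation
\begin{equation*}
\tilde{J}_{AB,i}-J_{AB}(x) = M_i^{-1}N_i^\top\bigl(\hat{\alpha}_i-N_i J_{AB}(x)\bigr).
\end{equation*}
Submultiplicativity of the spectral operator norm, together with the identifications in \eqref{eq:extreme_singular_values_of_polytopes} that give spectral norms $\sigma_{\min}(S_i)^{-2}$ for $M_i^{-1}$ and $\sigma_{\max}(S_i)$ for $N_i^\top$, combines with Step 1 to deliver the advertised pointwise bound with constant $C_i$, and the plan of the first paragraph then finishes the argument.

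\textbf{Main obstacle.} I expect the delicate point to be the linear-algebra step. Because $\card{\mathcal{N}(S_i)}\geq d+1$ by Lemma \ref{lemma:d_polytopes_have_dplus1_facets}, $N_i$ is strictly tall, so one cannot appeal to an inverse of $N_i$ itself and must factor the error through the left inverse $M_i^{-1}N_i^\top$; its existence relies squarely on Corollary \ref{corollary:rank_of_any_cell_is_d}. Matching precisely the constants $\sigma_{\max}(S_i)$ and $\sigma_{\min}(S_i)^{-2}$ dictated by \eqref{eq:extreme_singular_values_of_polytopes}, rather than the slightly sharper pseudoinverse bound $\sigma_{\min}(S_i)^{-1}$, is what pins down the exact form of $C$ in the theorem statement; a secondary subtlety is to let $x$ itself play the role of the reference point in Step 1, which removes the need for an additional triangle-inequality step and keeps the constant clean.
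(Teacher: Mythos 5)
Your proof is correct and follows essentially the same route as the paper: the same facet-averaged Lipschitz estimate $\abs{\hat{\alpha}_{ik}-n_{ik}\cdot J_{AB}(x)}\leq\Vrt{J_{AB}}_{\Lip}\rho$ obtained by letting $x\in S_i$ itself serve as the reference point, the same factorisation through the left inverse $M_i^{-1}N_i^\top$, and the identical constant $\sigma_{\max}(S_i)\sigma_{\min}^{-2}(S_i)\card{\mathcal{N}(S_i)}^{1/2}$. The one genuine difference is the order of the linear-algebra step: you derive $\tilde{J}_{AB,i}-J_{AB}(x)=M_i^{-1}N_i^\top\bigl(\hat{\alpha}_i-N_iJ_{AB}(x)\bigr)$ directly from the normal equations \eqref{eq:normal_equation}, whereas the paper inserts the identity $M_i^{-1}N_i^\top N_i$ in front of $\tilde{J}_{AB,i}-J_{AB}(x)$ and then substitutes $N_i\tilde{J}_{AB,i}=\hat{\alpha}_i$ from \eqref{eq:def_JAB_i_tilde} --- a substitution that is exact only when the overdetermined system happens to be solvable, which the paper itself notes need not hold. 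Your ordering sidesteps that issue (the residual $\hat{\alpha}_i-N_i\tilde{J}_{AB,i}$ lies in the kernel of $N_i^\top$, so the two computations agree, but yours makes this transparent) while arriving at the same pointwise bound \eqref{eq:error_bound_approx_J_AB_pointwise} and the same summation over cells in $I\setminus(J\cup K)$.
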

\begin{proof}
We first prove a pointwise error bound. For an arbitrary $x\in S\setminus (A\cup B)$,
	 \begin{equation}\label{eq:error_bound_approx_J_AB_pointwise}
	 \abs{\tilde{J}_{AB,i(x)} - J_{AB}(x)}_2 \leq \frac{\sigma_{\max}(S_{i(x)})}{\sigma_{\min}^2(S_{i(x)})}\sqrt{\card{\mathcal{N}(S_{i(x)})}} \Vrt{J_{AB}}_{\Lip}\rho,
	 \end{equation}
for the function $i:S\to I$ defined in \eqref{eq:approximate_probability_current_def}. Recall that the matrix $M_i$ from \eqref{eq:normal_equation} is invertible. Using \eqref{eq:normal_equation} and \eqref{eq:extreme_singular_values_of_polytopes}, we obtain
\begin{align*}
\abs{\tilde{J}_{AB,i(x)} - J_{AB}(x)}_2 &= \abs{M_{i(x)}^{-1}N_{i(x)}^\top N_{i(x)}\left(\tilde{J}_{AB,{i(x)}} - J_{AB}(x) \right)}_2
\nonumber
\\ 
&\leq \sigma_{\max}\left({M_{i(x)}}^{-1} \right)\sigma_{\max}(N_{i(x)}) \abs{ N_{i(x)}\left( \tilde{J}_{AB,i(x)} - J_{AB}(x) \right)}_2 
\\
&=\frac{\sigma_{\max}(S_{i(x)})}{\sigma_{\min}^2(S_{i(x)})}\abs{ N_{i(x)}\left( \tilde{J}_{AB,i(x)} - J_{AB}(x) \right)}_2.
\end{align*}
From \eqref{eq:def_JAB_i_tilde} we obtain
\begin{equation*}
\abs{N_{i(x)}(\tilde{J}_{AB,i(x)} - J_{AB}(x))}_2^2=\sum_{j\in\mathcal{N}(S_{i(x)})}\abs{\hat{\alpha}_{i(x)j} - n_{i(x)j}\cdot J_{AB}(x)}^2.
\end{equation*}
For any $i\in I$, $x\in S_i$, and $j\in\mathcal{N}(S_i)$,
\begin{align*}
 \abs{\hat{\alpha}_{ij}-n_{ij}\cdot J_{AB}(x)}&=\abs{\frac{1}{\sigma(\partial{S_i}\cap \partial{S_j})}\int_{\partial{S_i}\cap \partial{S_j}} n_{ij}\cdot \left(J_{AB}(y) - J_{AB}(x)\right)\sigma_{\partial S_i \cap \partial S_j}(\mathrm{d}y)}
 \\
 &\leq \frac{1}{\sigma(\partial{S_i}\cap \partial{S_j})}\int_{\partial{S_i}\cap \partial{S_j}} \abs{n_{ij}}_2\abs{J_{AB}(y) - J_{AB}(x)}_2\sigma_{\partial S_i \cap \partial S_j}(\mathrm{d}y)
 \\
 &\leq \frac{1}{\sigma(\partial{S_i}\cap \partial{S_j})}\int_{\partial{S_i}\cap \partial{S_j}} \Vrt{J_{AB}}_{\Lip}\abs{y - x}_2\sigma_{\partial S_i \cap \partial S_j}(\mathrm{d}y)
 \\
 &\leq \Vrt{J_{AB}}_{\Lip}\rho.
\end{align*}
Above, we used \eqref{eq:alpha_ik_def} and \eqref{eq:def_JAB_i_tilde} for the equation, the Cauchy-Schwarz inequality for the first inequality, the Lipschitz continuity assumption of $J_{AB}$ and the fact that for every $i\in I$ and $j\in\mathcal{N}(S_i)$ each $n_{ij}$ has unit norm in the second inequality, and the fact that $\{S_i\}_{i\in I}$ has width $\rho$ implies that for $x\in S_i$ and $y\in \partial S_i$, $\vert y-x\vert_2\leq \rho$ in the last inequality. Combining the preceding inequalities yields \eqref{eq:error_bound_approx_J_AB_pointwise}.

By Assumption \ref{assumption:JK_models_AB}, the union of the $S_i$ with $i\in I\setminus(J\cup K)$ covers $S\setminus (A\cup B)$, so
\begin{equation*}
 \Vrt{\tilde{J}_{AB}-J_{AB}}^2_{L^2(S\setminus (A\cup B),\mu;\bb{R}^d)}=\sum_{i\in I\setminus (J\cup K)}\int_{\inter{S_i}}\abs{\tilde{J}_{AB,i} - J_{AB}(x)}_2^2\mu(\mathrm{d}x).
\end{equation*}
Above, we use the fact that the boundaries of the cells have Lebesgue measure zero  and hence $\mu$-measure zero, the fact the $\{\inter{S_i}\}_{i\in I}$ are disjoint, and the fact that $x\in \inter{S_i}$ implies $i(x)=i$. Thus \eqref{eq:error_bound_approx_J_AB_fixed_tessellation} follows from \eqref{eq:error_bound_approx_J_AB_pointwise}, since $\mu(S)= 1$. 
\end{proof}

To establish a convergence result in the continuum limit, we need to specify a sequence $(\rho_k)_{k\in\bb{N}}$ of widths that decreases to zero, and to impose uniform control over the corresponding sequence of tessellation-dependent coefficients $C$ from \eqref{eq:error_bound_approx_J_AB_fixed_tessellation}.
\begin{corollary}
 \label{corollary:approximate_probability_current_convergence}
 Let $(\rho_k)_{k\in\mathbb{N}}\subset (0,\infty)$ satisfy $\rho_k\to 0$. For each $k\in\bb{N}$, let $\{S^k_i\}_{i\in I(k)}$ be a partition of $S$ with index set $I(k)$, width $\rho_k$, and corresponding approximate probability current $\tilde{J}^k_{AB}$, that satisfies Assumption \ref{assumption:JK_models_AB}. If there exists $0<C<\infty$ that does not depend on $k\in\mathbb{N}$ such that
 \begin{equation}
 \label{eq:uniform_bound_tessellation_constants}
  \sup_{k\in \bb{N}}\sup_{i\in I(k)}\sigma_{\max}(S^k_i)\sigma_{\min}^{-2}(S^k_i)\card{\mathcal{N}(S_i)}^{1/2}\leq C
 \end{equation}
 and if the probability current $J_{AB}$ is globally Lipschitz, then 
 \begin{equation*}
  \Vrt{\tilde{J}^k_{AB}-J_{AB}}_{L^2(S\setminus (A\cup B),\mu;\bb{R}^d)}\leq C\Vrt{J_{AB}}_\Lip\rho_k.
 \end{equation*}
\end{corollary}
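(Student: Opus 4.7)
The plan is essentially to observe that the corollary follows from a direct application of Theorem \ref{theorem:J_AB_error_bound} to each member of the sequence of tessellations, together with the uniform bound \eqref{eq:uniform_bound_tessellation_constants}. There is no real new work to do: the heavy lifting (the pointwise bound on $\abs{\tilde{J}_{AB,i(x)} - J_{AB}(x)}_2$, the normal-equation argument, and the integration against $\mu$) is already carried out inside the proof of Theorem \ref{theorem:J_AB_error_bound}.

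Concretely, I would proceed as follows. First, fix an arbitrary $k\in\bb{N}$. Since $\{S_i^k\}_{i\in I(k)}$ is a Voronoi tessellation of $S$ with width $\rho_k$ satisfying Assumption \ref{assumption:JK_models_AB}, and since the probability current $J_{AB}$ is globally Lipschitz, Theorem \ref{theorem:J_AB_error_bound} applies and yields
\begin{equation*}
\Vrt{\tilde{J}^k_{AB}-J_{AB}}_{L^2(S\setminus (A\cup B),\mu;\bb{R}^d)}\leq C_k\Vrt{J_{AB}}_{\Lip}\rho_k,
\end{equation*}
where $C_k := \sup_{i\in I(k)}\sigma_{\max}(S^k_i)\sigma_{\min}^{-2}(S^k_i)\card{\mathcal{N}(S^k_i)}^{1/2}$ is the tessellation-dependent constant from \eqref{eq:error_bound_approx_J_AB_fixed_tessellation} associated with the $k$-th tessellation.

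Second, I would invoke hypothesis \eqref{eq:uniform_bound_tessellation_constants}, which states exactly that $C_k\leq C$ uniformly in $k$, to replace $C_k$ by $C$ in the preceding inequality. This yields the claimed bound $\Vrt{\tilde{J}^k_{AB}-J_{AB}}_{L^2(S\setminus (A\cup B),\mu;\bb{R}^d)}\leq C\Vrt{J_{AB}}_\Lip\rho_k$ for every $k\in\bb{N}$. Because $\rho_k\to 0$ and $C\Vrt{J_{AB}}_{\Lip}$ is a fixed finite constant, this also implies $L^2$-convergence of $\tilde{J}^k_{AB}$ to $J_{AB}$ in the continuum limit.

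There is no genuine obstacle in this argument; the only subtle point worth highlighting in the write-up is why the uniform bound \eqref{eq:uniform_bound_tessellation_constants} is the natural hypothesis. The coefficient $C_k$ can in principle degenerate if the tessellations become very anisotropic (making $\sigma_{\min}(S^k_i)$ small relative to $\sigma_{\max}(S^k_i)$) or if individual cells acquire unboundedly many neighbours as $\rho_k\to 0$; hypothesis \eqref{eq:uniform_bound_tessellation_constants} precisely rules out these pathologies and thereby permits the convergence rate $O(\rho_k)$ to be transported from the fixed-tessellation bound to the continuum limit.
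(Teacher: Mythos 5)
Your proposal is correct and follows exactly the paper's own argument: apply Theorem \ref{theorem:J_AB_error_bound} to each tessellation to get the bound with the tessellation-dependent constant $C_k$, then use hypothesis \eqref{eq:uniform_bound_tessellation_constants} to replace $C_k$ by the uniform constant $C$. Your closing remark on why the uniform bound is the natural hypothesis is a nice addition but does not change the substance.
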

\begin{proof}
 For each $k\in\bb{N}$, Theorem \ref{theorem:J_AB_error_bound} yields the upper bound on the error 
 \begin{align}
  \Vert \tilde{J}^k_{AB}-J_{AB}\Vert_{L^2(S\setminus (A\cup B),\mu;\bb{R}^d)}  \leq C_k\Vrt{J_{AB}}_\Lip\rho_k,
 \nonumber
 \\
 \label{eq:C_k}
 C_k:=\sup_{i\in I(k)}\sigma_{\max}(S^k_i)\sigma_{\min}^{-2}(S^k_i)\card{\mathcal{N}(S_i)}^{1/2}.
\end{align}
Since $\sup_{k\in\bb{N}}C_k\leq C$ by \eqref{eq:uniform_bound_tessellation_constants}, the conclusion follows.
\end{proof}
A sequence of Voronoi tessellations that satisfies \eqref{eq:uniform_bound_tessellation_constants} is the sequence of tessellations where each sets in the tessellation is a translation and scaling of the hypercube in $\bb{R}^d$. In this case, for every $i\in I(k)$ we have that $\card{\mathcal{N}(S_i)}=2d$, and the matrix of outer unit normals $N_i$ for each $S^k_i$ satisfies $N_i^\top N_i=2I_d$, where $I_d$ is the $d\times d$ identity matrix. However, since the number of sets in the resulting hypercubic partition of $S$ increases exponentially with the dimension $d$, such a tessellation would not be suitable when $d\gg 1$. The question of designing algorithms that produce discretisations satisfying \eqref{eq:uniform_bound_tessellation_constants} is interesting, but beyond the scope of this paper.

In this section, the only objects that require computing the vertices of every Voronoi cell are the volumes $\sigma(\partial S_i\cap\partial S_k)$ of the facets. These volumes are needed in \eqref{eq:hat_alpha_ik}. By Remark \ref{remark:do_not_need_to_compute_vertices_for_normals}, the full Voronoi tessellation is not necessary to compute the matrices $\{N_i\}_{i\in I}$ of outer unit normals.

\subsection{Approximate streamlines}
\label{ssec:approximate_streamlines}

In \S\ref{sec:intro}, we described the streamlines as integral curves of the probability current $J_{AB}$. That is, a streamline starting at $s_0\in\partial A$ and ending at $\partial B$ is the solution $\{s(t)\}_{t\in [0,T(s_0)]}$ of a differential equation with $J_{AB}$ as the driving vector field. We can write this streamline as 
\begin{subequations}
\label{eq:streamline}
\begin{align}
\label{eq:true_streamline_alternate_definition}
s(t)&=s_0+\int_0^{t}J_{AB}(s(r))\mathrm{d}r, \quad 0\leq t\leq T(s_0),
\\
 T(s_0)&:=\inf\{t>0\ :\ s(t)\in B\}.
 \label{eq:streamline_time_T} 
\end{align}
\end{subequations}
Note that for every $s_0\in\partial A$, $T(s_0)$ is finite. If this were not the case, then one can obtain a contradiction by using \eqref{eq:probability_current_def} to show that there exist reactive trajectories that do not reach $B$.

For a given tessellation $\{S_i\}_{i\in I}$ with index subsets $K$ and $J$ from Assumption \ref{assumption:JK_models_AB} with corresponding approximate probability current $\tilde{J}_{AB}$ defined in \eqref{eq:approximate_probability_current_def}, we define the approximate streamline with initial condition $\tilde{s}_0\in\partial (\cup_{i\in J}S_i)$ by
\begin{subequations}
 \label{eq:approximate_streamline}
 \begin{align}
 \label{eq:approximate_streamline_alternate_definition}
 \tilde{s}(t)&=\tilde{s}_0+\int_0^{t}\tilde{J}_{AB}(\tilde{s}(r))\mathrm{d}r,\quad 0\leq t\leq \tilde{T}(\tilde{s}_0),
 \\
 \tilde{T}(\tilde{s}_0)&:=\inf\left\{t>0\ :\ \tilde{s}(t)\in \partial\left( \cup_{i\in K}S_i\right)\right\}.
 \label{eq:approximate_streamline_time_tilde_T}
 \end{align}
\end{subequations}
Note that $\tilde{T}(\tilde{s}_0)$ need not be finite for every $\tilde{s}_0\in \partial(\cup_{i\in J}S_i)$: we cannot apply the same reasoning as we did for showing that $T(s_0)$ is finite, because $\tilde{J}_{AB}$ is only an approximation of $J_{AB}$.

\begin{theorem}
\label{theorem:error_bound_for_discrete_streamlines}
 Let $(\rho_k)_{k\in\mathbb{N}}\subset (0,\infty)$ satisfy $\rho_k\to 0$. For each $k\in\bb{N}$, let $\{S^k_i\}_{i\in I(k)}$ be a partition of $S$ with width $\rho_k$ that satisfies Assumption \ref{assumption:JK_models_AB}. Fix $s_0\in\partial A$. For each $k\in\bb{N}$, let $\tilde{s}^k$ denote the approximate streamline corresponding to $\{S^k_i\}_{i\in I(k)}$ with $\tilde{s}^k(0)=s_0$. If $J_{AB}$ is globally Lipschitz continuous and there exists $0<C<\infty$ that does not depend on $k\in\mathbb{N}$ such that \eqref{eq:uniform_bound_tessellation_constants} holds, then there exists $C'>0$ that depends only on $C$, $\Vert J_{AB}\Vert_{\Lip}$ and $s_0$, such that for all $k\in\bb{N}$,
 \begin{equation*}
     \Vrt{\tilde{s}^k - s}_{L^\infty([0,T(s_0)],\mathrm{d}t;\mathbb{R}^d)}\leq C' \rho_k.
     \end{equation*}
\end{theorem}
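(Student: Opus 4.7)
The plan is to combine a Gr\"onwall-type inequality with the uniform pointwise error bound on the approximate probability current that is implicit in the proof of Theorem \ref{theorem:J_AB_error_bound}. First, I would extract from inequality \eqref{eq:error_bound_approx_J_AB_pointwise} together with the uniform hypothesis \eqref{eq:uniform_bound_tessellation_constants} the pointwise analogue of Corollary \ref{corollary:approximate_probability_current_convergence}: for every $k\in\bb{N}$ and every $x\in S\setminus(A\cup B)$,
\begin{equation*}
 \abs{\tilde{J}^k_{AB}(x) - J_{AB}(x)}_2 \leq C \Vrt{J_{AB}}_{\Lip}\, \rho_k.
\end{equation*}
This is the trajectory-level input that drives the rest of the argument, and it holds uniformly because of the uniform control on $\sigma_{\max}(S^k_i)\sigma_{\min}^{-2}(S^k_i)\card{\mathcal{N}(S_i)}^{1/2}$.

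Next, setting $\tau_k := \min(T(s_0), \tilde{T}^k(s_0))$ and $e^k(t) := \tilde{s}^k(t) - s(t)$, I would subtract the integral representations \eqref{eq:true_streamline_alternate_definition} and \eqref{eq:approximate_streamline_alternate_definition}, use the common initial condition $\tilde{s}^k(0) = s(0) = s_0$, and insert $\pm J_{AB}(\tilde{s}^k(r))$ inside the integrand to obtain, for $t \in [0, \tau_k]$,
\begin{equation*}
 \abs{e^k(t)}_2 \leq C\Vrt{J_{AB}}_{\Lip}\, \rho_k\, t + \Vrt{J_{AB}}_{\Lip} \int_0^t \abs{e^k(r)}_2 \, \mathrm{d}r,
\end{equation*}
where the first term uses the uniform pointwise bound above and the second uses the global Lipschitz continuity of $J_{AB}$. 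The classical Gr\"onwall inequality then yields the desired $O(\rho_k)$ estimate on $[0, \tau_k]$ with constant $C'_0 := C \Vrt{J_{AB}}_{\Lip}\, T(s_0)\, \exp(\Vrt{J_{AB}}_{\Lip}\, T(s_0))$, which depends only on $C$, $\Vrt{J_{AB}}_{\Lip}$, and (through $T(s_0)$) on $s_0$.

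The main obstacle is reconciling the two first-hitting times, so that the supremum norm extends from $[0, \tau_k]$ to the full interval $[0, T(s_0)]$. Under Assumption \ref{assumption:JK_models_AB} we have $\partial(\cup_{i \in K} S_i^k) = \partial B$, so both streamlines terminate on the same boundary set; the Gr\"onwall bound at $\tau_k$ gives $\abs{\tilde{s}^k(\tau_k) - s(\tau_k)}_2 \leq C'_0 \rho_k$. Combining this with the boundedness of $\abs{J_{AB}}_2$ on $S$ and the transversality of $J_{AB}$ to $\partial B$ at $s(T(s_0))$ (which follows from the definition of $T(s_0)$ as a first hitting time together with the continuity of $J_{AB}$, forcing a nonzero component of $J_{AB}(s(T(s_0)))$ normal to $\partial B$), one deduces $\abs{T(s_0) - \tilde{T}^k(s_0)} = O(\rho_k)$. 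Extending whichever streamline terminates first past its stopping time by constant prolongation adds at most $\sup_{x \in S} \abs{J_{AB}(x)}_2 \cdot \abs{T(s_0) - \tilde{T}^k(s_0)} = O(\rho_k)$ to the $L^\infty$ difference on $[0, T(s_0)]$; this extra contribution can be absorbed into a final constant $C'$ of the same form, yielding the stated bound.
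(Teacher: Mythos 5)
Your core argument is the same as the paper's: subtract the integral representations \eqref{eq:true_streamline_alternate_definition} and \eqref{eq:approximate_streamline_alternate_definition}, insert $\pm J_{AB}(\tilde{s}^k(r))$, control one term by the global Lipschitz constant and the other by the \emph{pointwise} current bound \eqref{eq:error_bound_approx_J_AB_pointwise} made uniform via \eqref{eq:uniform_bound_tessellation_constants}, and close with Gronwall; your constant $C\Vrt{J_{AB}}_{\Lip}T(s_0)e^{\Vrt{J_{AB}}_{\Lip}T(s_0)}$ is exactly the paper's. Where you diverge is in handling the mismatch of stopping times: the paper simply runs the integral inequality on all of $[0,T(s_0)]$ without introducing $\tau_k=\min(T(s_0),\tilde{T}^k(s_0))$ or worrying about whether $\tilde{s}^k$ is defined beyond $\tilde{T}^k$, whereas you stop at $\tau_k$ and then argue $\abs{T(s_0)-\tilde{T}^k(s_0)}=O(\rho_k)$ via transversality plus constant prolongation. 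You are right that this is the genuine subtlety, and your treatment is more honest than the paper's. However, the transversality step as you state it does not follow from what you cite: the finiteness of the first hitting time $T(s_0)$ together with continuity of $J_{AB}$ does not force a nonzero normal component of $J_{AB}$ at $s(T(s_0))$ --- a streamline can reach $\partial B$ tangentially --- and your quantitative claim $\abs{T(s_0)-\tilde{T}^k(s_0)}=O(\rho_k)$ needs that normal component bounded away from zero (with the lower bound entering $C'$). So either state non-tangency as an additional hypothesis, or follow the paper and simply compare $s$ and $\tilde{s}^k$ as solutions of the two integral equations on all of $[0,T(s_0)]$ (extending $\tilde{s}^k$ by the same integral formula past $\tilde{T}^k$ if necessary), which is all the stated $L^\infty([0,T(s_0)])$ bound requires. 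The choice of $\ell_2$ versus the paper's $\ell_\infty$ pointwise norm is immaterial.
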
 
Note that the error bound for the approximate probability current in Theorem \ref{theorem:J_AB_error_bound} is computed using the $L^2$ norm, while the error bound for the approximate streamline in Theorem \ref{theorem:error_bound_for_discrete_streamlines} is computed using the $L^\infty$ norm. This is because we are interested in the maximal deviation of the approximate streamlines $\tilde{s}^k$ from the true streamline, given that they have a common initial condition.
\begin{proof}
Fix an arbitrary $k\in\mathbb{N}$, and let $\tilde{J}^k_{AB}$ be the approximate probability current corresponding to $\{S^k_i\}_{i\in I(k)}$ that generates the approximate streamline $\tilde{s}^k$ via \eqref{eq:approximate_streamline_alternate_definition} and the initial condition $\tilde{s}^k(0)=s_0$. It follows from \eqref{eq:true_streamline_alternate_definition} that
\begin{equation*}
 \abs{s(t)-\tilde{s}^k(t)}_\infty\leq \int_0^t\abs{J_{AB}(s(r))-\tilde{J}^k_{AB}(\tilde{s}^k(r))}_\infty\mathrm{d}r,\quad 0\leq t\leq T(s_0).
\end{equation*}
By the triangle inequality and Lipschitz continuity of $J_{AB}$, we have
\begin{align*}
 &\abs{J_{AB}(s(r))-\tilde{J}^k_{AB}(\tilde{s}^k(r))}_\infty 
 \\
 &\leq \abs{J_{AB}(s(r))-J_{AB}(\tilde{s}^k(r))}_\infty+\abs{J_{AB}(\tilde{s}^k(r))-\tilde{J}^k_{AB}(\tilde{s}^k(r))}_\infty
 \\
 &\leq \Vrt{J_{AB}}_{\Lip}\abs{s(r)-\tilde{s}^k(r)}_\infty+\abs{J_{AB}(\tilde{s}^k(r))-\tilde{J}^k_{AB}(\tilde{s}^k(r))}_\infty,
\end{align*}
for all $0\leq r\leq T(s_0)$. We bound the second term on the right-hand side of the last inequality: for all $0\leq r\leq T(s_0)$,
\begin{align*}
 \abs{J_{AB}(\tilde{s}^k(r))-\tilde{J}^k_{AB}(\tilde{s}^k(r))}_\infty= & \abs{J_{AB}(\tilde{s}^k(r))-\tilde{J}^k_{AB,i(\tilde{s}^k(r))}}_\infty
 \\
 \leq&  \abs{J_{AB}(\tilde{s}^k(r))-\tilde{J}^k_{AB,i(\tilde{s}^k(r))}}_2\leq C_k\Vrt{J_{AB}}_{\Lip}\rho_k,
\end{align*}
where the equation follows from \eqref{eq:approximate_probability_current_def}, the first inequality follows from the inequality $\abs{x}_\infty\leq \abs{x}_2$ that is valid for any finite-dimensional vector $x$, and the second inequality follows from \eqref{eq:error_bound_approx_J_AB_pointwise} with $C_k$ defined in \eqref{eq:C_k}. Thus for $0\leq t\leq T(s_0)$,
\begin{align*}
 \abs{s(t)-\tilde{s}^k(t)}_\infty &\leq \Vrt{J_{AB}}_{\Lip}\int_0^t \left(\abs{s(r)-\tilde{s}^k(r)}_\infty+C\rho_k\right)\mathrm{d}r
 \\
 &\leq  \Vrt{J_{AB}}_{\Lip}\left(\int_0^t \abs{s(r)-\tilde{s}^k(r)}_\infty\mathrm{d}r+C_kt\rho_k \right),
\end{align*}
and by the Gronwall-Bellman inequality, we obtain the error bound
\begin{equation*}
 \abs{s(t)-\tilde{s}^k(t)}_\infty \leq C_k\Vrt{J_{AB}}_{\Lip}T(s_0) e^{T(s_0)\Vrt{J_{AB}}_{\Lip}} \rho_k,\quad 0\leq t\leq T(s_0).
\end{equation*}
Since there exists $0<C<\infty$ such that \eqref{eq:uniform_bound_tessellation_constants} holds, the proof is complete.
\end{proof}

Note that the definition of the approximate streamlines does not permit an interpretation in terms of $Y$. This definition allows for two approximate streamlines that lie in the same cell $S_i$ at possibly different times to subsequently enter different cells, e.g. if the streamlines exit $S_i$ via different facets. In terms of the Delaunay graph $G=(I,E)$ of the Voronoi tessellation, this means that it is possible for $\tilde{s}_1(t_1)=\tilde{s}_2(t_2)=i$ for possibly different $t_1,t_2$ and $\tilde{s}_1(t'_1)\neq \tilde{s}_2(t'_2)$ for $t'_1>t_1$, $t'_2>t_2$. This non-uniqueness property prevents an interpretation of the discrete streamlines in $G$ for $Y$ as being analogues of the streamlines in $S$ for $X$, because standard results from the theory of ordinary differential equations guarantee that the streamlines of $X$ are uniquely determined by the probability current and initial condition. Since the approximate streamlines are integral curves of the approximate probability current, this implies in addition that the approximate probability current does not permit an interpretation as a probability current for $Y$ on $G$.

\section{Numerical results}
\label{sec:numerical_example}
In this section we implement our method on a well-known example. We focus on the approximate committor and the approximate probability current only, and study the dependence of the errors of these objects as a function of the tessellation width parameter $\rho$. 

Let the underlying diffusion process $X$ be the solution to the Smoluchowski SDE
\begin{equation}
\label{eq:Smoluchowski}
	\mathrm{d}{X}_t =- \Gamma^{-1}\nabla V(X_t)\mathrm{d}t + \sqrt{2\beta^{-1}}\Gamma^{-\frac{1}{2}}\mathrm{d}W_t, 
\end{equation}
where $V: \bb{R}^d \rightarrow \bb{R}$ denotes the potential function, $\beta$ is an inverse temperature parameter, i.e. $\beta  =(k_B \mathcal{T})^{-1}$ with Boltzmann constant denoted by $k_B$ and temperature $\mathcal{T}$, and $\Gamma \in \bb{R}^{d \times d}$ is a diagonal matrix with friction coefficients on the diagonal.

\subsection{Setup, discretisation, trajectory sampling}

In our example, $d = 2$, and $V$ is the triple well potential from \cite{park2003reaction} given by 
\begin{align}\label{eq:potential_func}
    V(x_1,x_2) &= 3e ^{-x_1^2 - \left(x_2 - \frac{1}{3} \right)^2} - 3e^{-x_1^2 - \left(x_2 - \frac{5}{3} \right)^2} - 5 e^{-(x_1-1)^2 - x_2^2} - 5e^{-(x_1+1)^2 - x_2^2} \nonumber \\
    & + \frac{1}{5}x_1^4 + \frac{1}{5}\left(x_2 - \frac{1}{3}\right)^4,
\end{align}
on the state space $S = \{x=(x_1,x_2)\in \bb{R}^2 \hskip1ex \vert \hskip1ex -2\leq x_1 \leq 2, \hskip1ex -1.5\leq x_2 \leq 2.5\}$, as shown in Figure \ref{figure:energy_landscape}. We define the metastable sets to be the sublevel sets of $V$ that contain the minima at $(\pm 1,0)$, i.e. $A:= \{(x_1,x_2)\in S \hskip1ex \vert \hskip1ex V(x_1,x_2) \leq -3,\hskip1ex x_1\leq 0\}$ and $B:= \{(x_1,x_2)\in S\hskip1ex \vert \hskip1ex V(x_1,x_2)\leq -3,\hskip1ex x_1 \geq 0\}$. 

For the discretisation of $S$ we used a mesh of side length $h$, where the values of $h$ were chosen from $\{0.5, 0.4, 0.25, 0.1,0.05\}$. Letting $[N]:=\{1,2,\ldots, N\}$, it follows that the corresponding discrete state spaces $I$ were $\{[64],[100],[256],[1600],[6400]\}$, that any cell in any tessellation has at most four adjacent cells, and that the resulting Voronoi tessellations each have width $h\sqrt{2}$. With regards to Assumption \ref{assumption:JK_models_AB}, for each tessellation, we defined $i\in J$ (respectively $i\in K$) if at least one vertex of the square $S_i$ belongs to $A$ (resp. $B$). This results in a representation of the sets $A$ and $B$ by the sets $\cup_{j\in J}S_j$ and $\cup_{k\in K}S_k$ respectively. An example of such a tessellation and the sets corresponding to $J$ and $K$ is shown in Figure \ref{figure:energy_landscape_top_view_mesh}.

\begin{figure*}[t!]
    \centering
    \begin{subfigure}[t]{0.475\textwidth}
        \includegraphics[width=1\linewidth]{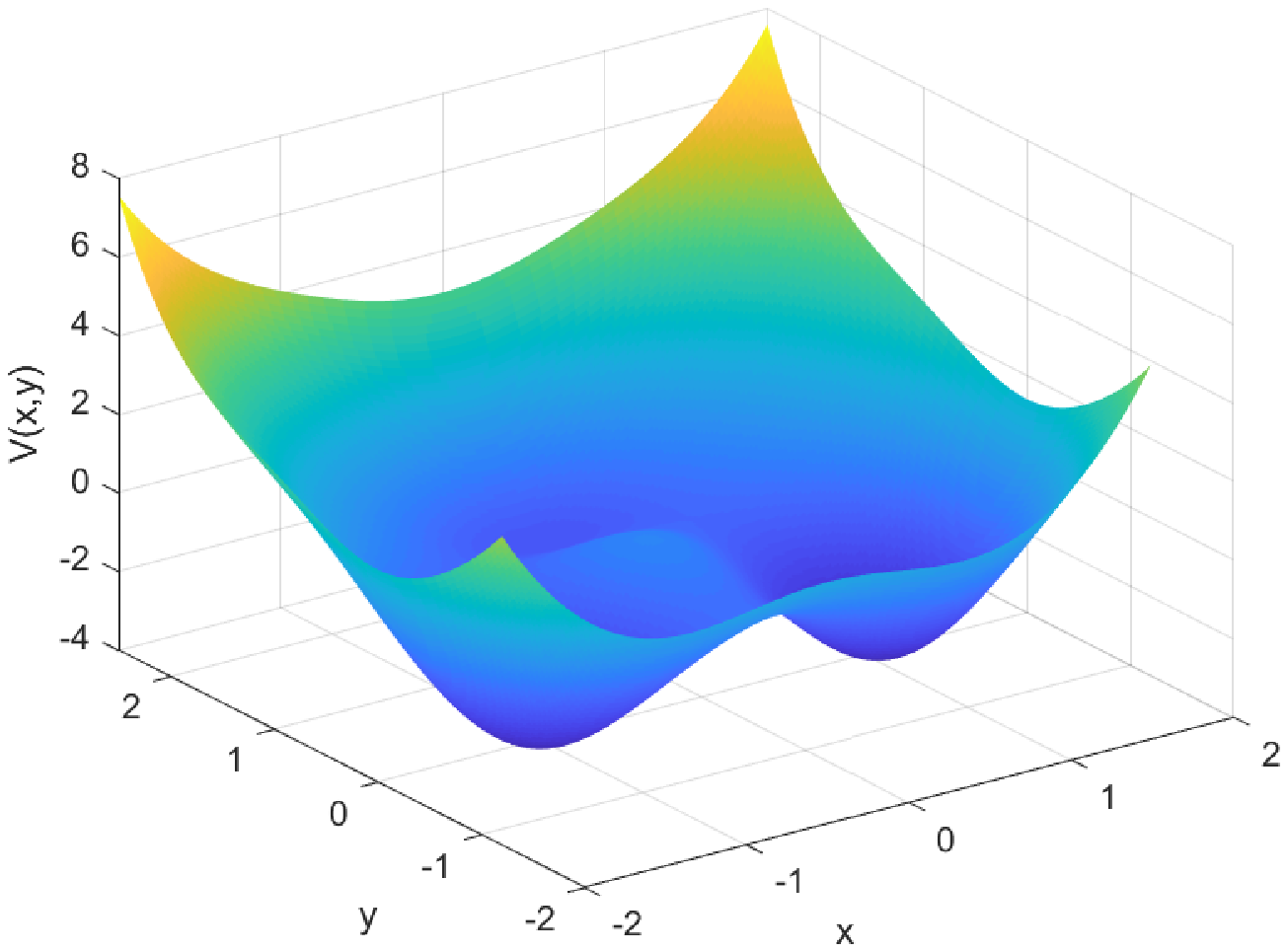}
        \caption{Graph of the potential $V$ given in \eqref{eq:potential_func}.}
    \label{figure:energy_landscape}
    \end{subfigure}%
    \hskip2ex
    \begin{subfigure}[t]{0.475\textwidth}
        \centering
        \includegraphics[width=1\linewidth]{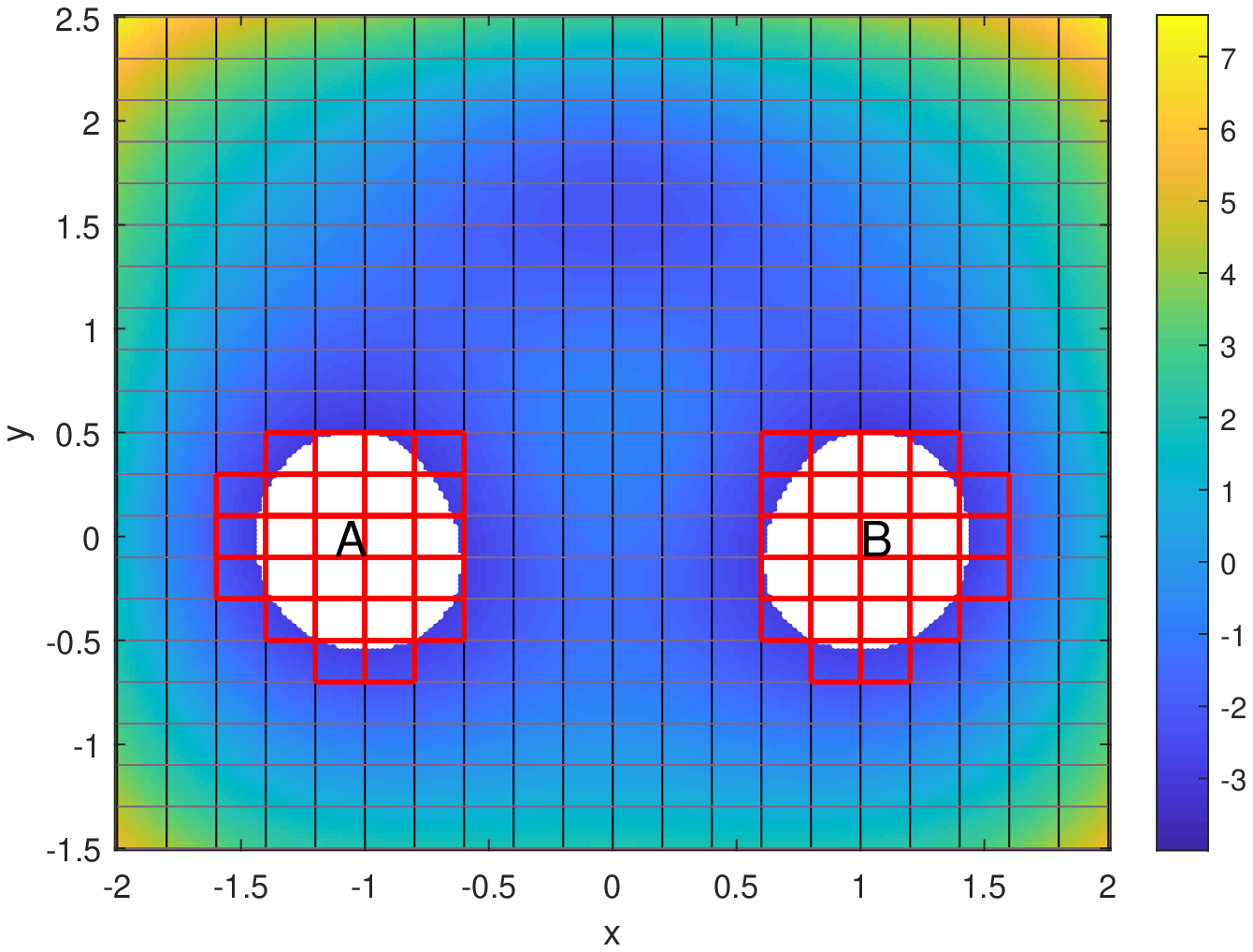}
        \caption{Representation of $A$ and $B$ (white) for a given $\{S_i\}_{i\in I}$ by cells in $J\cup K$ (red).}
    \label{figure:energy_landscape_top_view_mesh}
    \end{subfigure}
    \caption{The energy landscape $V$ and partition of state space.}
\end{figure*}

We simulated trajectories of $X$ with $\beta = 1.67$ and $\Gamma = \bb{I}$ in \eqref{eq:Smoluchowski} using the Euler-Maruyama method, with time step $\Delta t = 0.001$. This resulted in the scalar prefactor $\sqrt{2\Delta t\beta^{-1}}\approx 0.034$ for each standard normal random variable in the Euler-Maruyama method. 

For the results below, we chose $\mu$ to be the invariant Boltzmann-Gibbs measure with density proportional to $\exp(-\beta V(\cdot))$ on $S$, and used trapezoidal quadrature to compute the $\mu$-measure of cells.

\subsection{Approximate committor}
\label{ssec:numerical_results_approximate_committor}

To obtain the discrete committor function described in Section \ref{ssec:approximate_committor}, we sampled $10^4$ trajectories for each cell $S_i$ in the partition. For simplicity, we drew the initial condition of each trajectory independently from the uniform distribution on $S_i$. Since the uniform distribution on $S_i$ and the normalised restriction of the ergodic measure $\mu$ to $S_i$ in general do not coincide, this choice incurs an error; however, we shall show that for sufficiently small mesh size $h$ this error does not play a significant role relative to other errors. Each sample trajectory was terminated when the state of the trajectory reached $A$ or $B$. The approximate committor $\tilde{q}$ was computed from the computed trajectories using
\eqref{eq:approximate_committor_func}. 

To obtain the reference values of the committor $q$ for each mesh of side length $h$, we used that $q$ solves the Feynman-Kac or backward Kolmogorov PDE
\begin{align*}
 Lq(x)=&0,\quad x\in\inter{S\setminus (A\cup B)}
 \\
 q\vert_{\clos{A}}=&0,\quad 
 q\vert_{\clos{B}}=1, \quad
 \nabla q\vert_{\partial S}=0.
\end{align*}
where $L$ denotes the infinitesimal generator of \eqref{eq:Smoluchowski}; see \cite[Eq. (10)]{weinan2006towards}. We used the finite differences method to solve the PDE on each mesh, for the values of $h$ stated above. 

To compute the error $\Vert q-\tilde{q}\Vert_{L^2(\mu)}$, we computed the difference of the vector versions of $q$ and $\tilde{q}$, weighted by the vector of $\mu$-measures of each cell. We plot the results in Figure \ref{figure:eqbm_weighted_L2_error_committor} and observe that the observed values of the $L^2(\mu)$ error of the approximate committor decreased linearly with the partition width for the considered values of $\rho$, with an observed slope of approximately 1. Thus, the numerical results for the approximate committor are consistent with the bound in Corollary \ref{corollary:approximate_committor_convergence}.

\begin{figure*}[t!]
    \centering
    \begin{subfigure}[t]{0.475\textwidth}
        \includegraphics[width=1\linewidth]{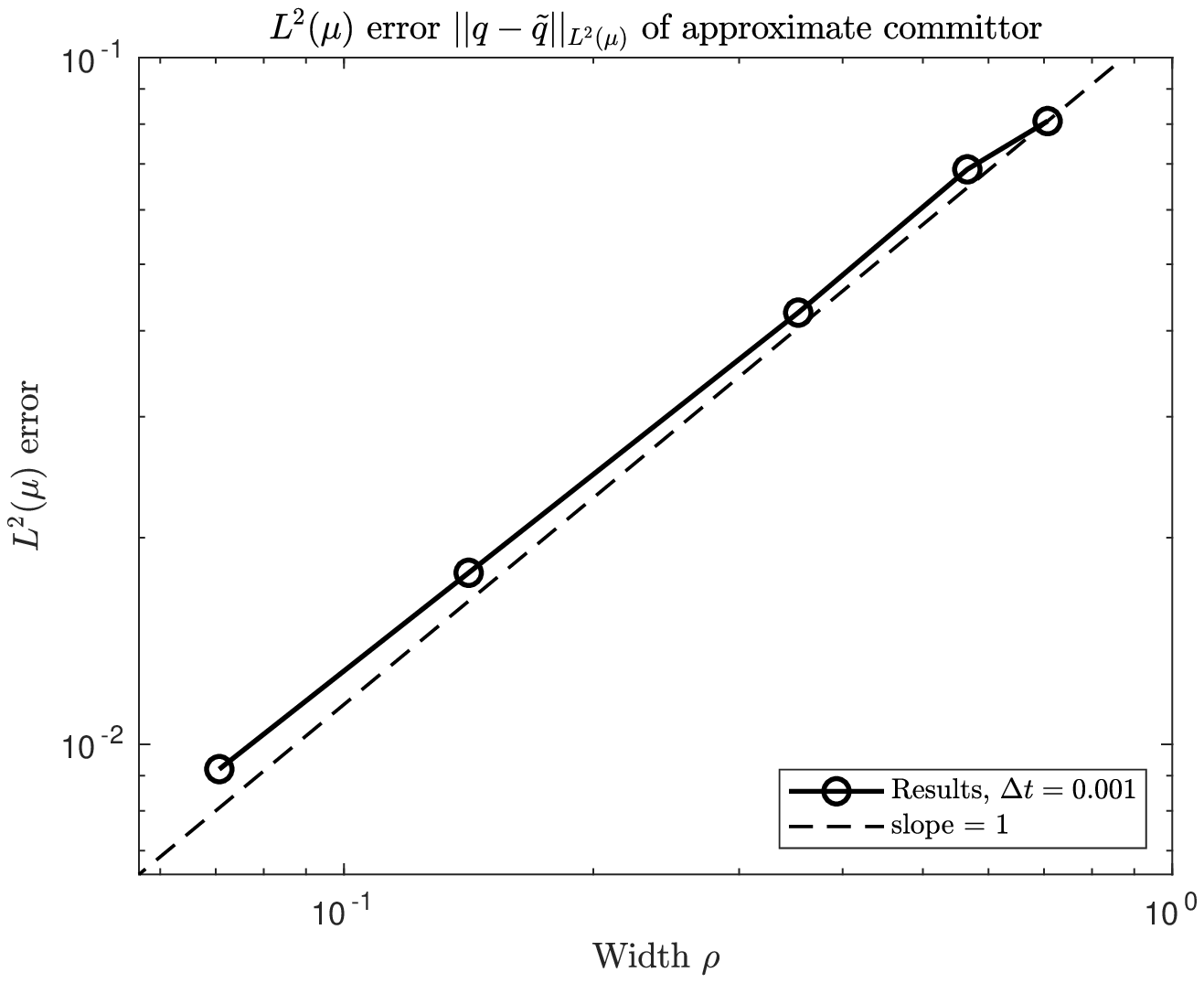}
        \caption{Plot of $\Vrt{q-\tilde{q}}_{L^2(\mu)}$ error.}
    \label{figure:eqbm_weighted_L2_error_committor}
    \end{subfigure}%
    \hskip2ex
    \begin{subfigure}[t]{0.475\textwidth}
        \centering
        \includegraphics[width=1\linewidth]{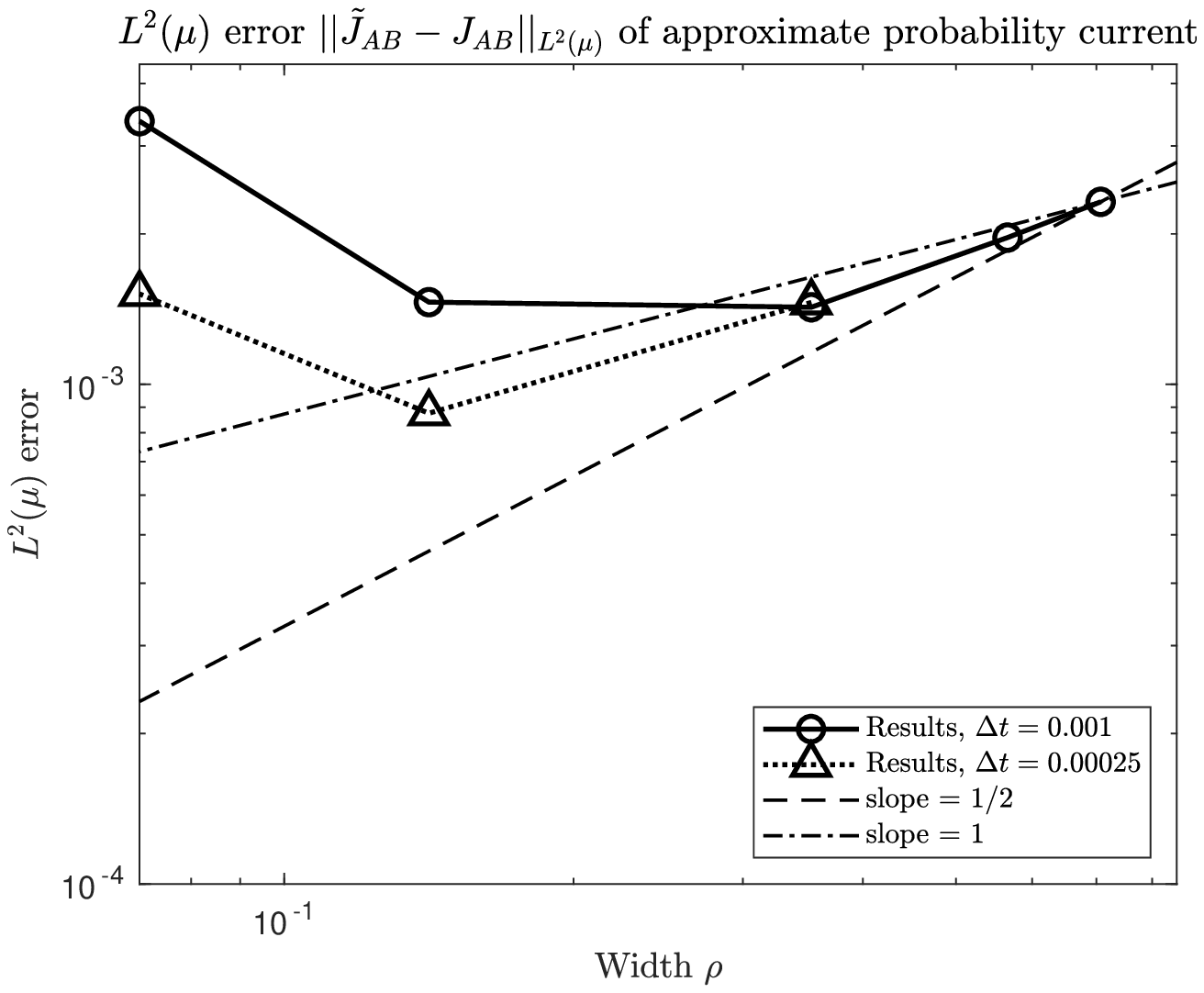}
        \caption{Plot of $\Vert J_{AB}-\tilde{J}_{AB}\Vert_{L^2(\mu)}$ error.}
    \label{figure:eqbm_weighted_L2_error_probcurrent}
    \end{subfigure}
    \caption{Plots of $L^2(\mu)$-errors of approximate committor and approximate probability current as a function of width $\rho$. See the discussion in \S\ref{ssec:numerical_results_approximate_committor} and \S\ref{ssec:numerical_results_approximate_probability_current}.}
\end{figure*}

\subsection{Approximate probability current}
\label{ssec:numerical_results_approximate_probability_current}

To compute the approximate probability current, we need to compute for each $i\in I$ the vector $\tilde{J}_{AB,i}\in\bb{R}^d$, which in turn requires the vector $\hat{\alpha}_i\in\mathbb{R}^{\card{\mathcal{N}(S_i)}}$ of surface area-normalised net flows of reactive trajectories; see \eqref{eq:def_JAB_i_tilde}. To do this, we followed \eqref{eq:alpha_ik_def} and sampled a long trajectory starting from the origin until $10^5$ reactive trajectory segments were collected. We then followed the procedure described in the paragraph immediately after Remark \ref{remark:approximate_probability_current_def_arbitrariness}. The duration of this long trajectory was computed as $T:=M\Delta t$, for $M$ the number of Euler-Maruyama steps. The observation time window parameter was set to be the Euler-Maruyama time step, i.e. $s:=\Delta t$. In particular, both limits were approximated by taking fixed values of $s$ and $T$. Using the reactive trajectory segments, we computed $\alpha_{ik}$ for each $i\in I$ and for each $k\in\mathcal{N}(S_i)$. Since each set $S_i$ in the partition is a square of side length $h$, it follows that $\sigma(\partial S_i\cap \partial S_k)=h$ for any adjacent $S_i$ and $S_k$. These sets of numbers then yielded the $\hat{\alpha}_{ik}$ by \eqref{eq:hat_alpha_ik}. For each $i\in I$, we solved the system \eqref{eq:normal_equation} for $\tilde{J}_{AB,i}$ by using the fact that $M_i=2I_2$, where $I_2$ is the $2\times 2$ identity matrix; see the paragraph immediately before \S\ref{ssec:approximate_streamlines}. We then constructed the approximate probability current according to \eqref{eq:approximate_probability_current_def}.

To obtain the reference values of the probability current $J_{AB}$ for each mesh of side length $h$, we used the fact that for diffusion processes given by Smoluchowski SDEs, the probability current is proportional to the product of the Boltzmann-Gibbs density with the gradient of the committor
\begin{equation*}
 J_{AB}(x)=\frac{1}{Z}e^{-\beta V(x)}\beta^{-1}\nabla q(x),
\end{equation*}
see \cite[Eq. (6)]{TPT_illustration}. For each mesh of side length $h$, we used the finite differences approximation of $q$ to approximate the gradient $\nabla q$ and computed the reference values of the probability current using the equation above.

We plot the $L^2(\mu)$ error of the approximate probability current relative to the finite differences solution in Figure \ref{figure:eqbm_weighted_L2_error_probcurrent}. For the three largest values of $h$, i.e. for $h\in\{0.5, 0.4,0.25\}$, the observed values of the $L^2(\mu)$ error of the approximate committor decrease linearly with the partition width; the slope of the line is between $1/2$ and $1$. Subsequently, the observed values of the $L^2(\mu)$ error increase for $h\in\{0.1,0.05\}$. These observed results are not consistent with the convergence behaviour described by Corollary \ref{corollary:approximate_probability_current_convergence}. 

The increase in $L^2(\mu)$ error for $h\in\{0.1,0.05\}$ is a consequence of under-counting of transitions, which in turn is due to an observation time window that is too large for the mesh side length parameter $h$. For $\Delta t=0.001$, recall that $\sqrt{2\Delta t\beta^{-1}}\approx 0.034$. Hence, with more than 95\% probability, the magnitude of each random perturbation in the Euler-Maruyama method is less than or equal to $0.1$. As a result, we expect to significantly underestimate the number of crossings of $\partial S_i\cap\partial S_k$ when each $S_i$ has side length approximately equal to or less than $\sqrt{2\Delta t\beta^{-1}}$, i.e. for $h\in\{0.05,0.1\}$.

To check the validity of the preceding explanation for the increase in $L^2(\mu)$ errors for $h\in\{0.1,0.05\}$, we sampled for each $h\in\{0.25,0.1,0.05\}$ a single long trajectory with $\Delta t=0.00025$ and computed the corresponding approximate probability current. For $\Delta t=0.00025$, $\sqrt{2\Delta t\beta^{-1}}\approx 0.017$ and thus with more than 95\% probability the magnitude of each random perturbation in the Euler-Maruyama method is less than or equal to $0.051$. The results of the additional experiments are plotted in Figure \ref{figure:eqbm_weighted_L2_error_probcurrent}: for $h=0.1$ and $\Delta t=0.00025$ the result approximately follows the same linear trend of the observed $L^2(\mu)$ error for $h\in\{0.5,0.4,0.25\}$ and $\Delta t=0.001$. For $h=0.05$, $\sqrt{2\Delta t\beta^{-1}}\approx h$, and we observe that the $L^2(\mu)$ error deviates significantly from this trend. These results support our proposed explanation. 

We further analysed the error between the approximate probability current $\tilde{J}_{AB}$ and the probability current $J_{AB}$ obtained from finite differences by computing two error metrics at each point: the scaling ratio $R(x)$ and the direction error $D(x)$,
\begin{equation}
\label{eq:discrepancy_metrics_probability_current}
 R(x):=\frac{\vert \tilde{J}_{AB}(x)\vert_2}{\abs{J_{AB}(x)}_2},\quad D(x):=\textup{arccos}\left( \frac{\tilde{J}_{AB}(x)^\top J_{AB}(x)}{\vert \tilde{J}_{AB}(x)\vert_2\abs{J_{AB}(x)}_2}\right),
\end{equation}
where we choose units so that $0\leq D(x)\leq \pi$. For $\Delta t=0.00025$ and for each value of $h\in\{0.05,0.1,0.25\}$, we computed the scaling ratio and direction error for cells $S_i$ on which the value of the potential $V$ is less than or equal to 1. We restrict to these cells because the sampled reactive trajectories spend significantly less time in the region $[V>1]:=\{x\in S\ :\ V(x)>1\}$ compared to its complement $[V\leq 1]:=\{x\in S\ :\ V(x)\leq 1\}$. As a result, we do not expect the observed statistics for cells in $[V>1]$ to provide reasonable approximations of their true values in $[V>1]$, compared to the observed statistics for cells in $[V\leq 1]$. In addition, we thresholded the values of the scaling ratio to the interval $0\leq R(x)\leq 2$ to reduce the influence of large outliers and to make the comparison symmetric about the desired scaling ratio of 1. We plot the frequency histograms for the direction error $D$ and scaling ratio $R$ in Figures \ref{figure:direction_error_frequency_histogram} and \ref{figure:scaling_ratio_frequency_histogram}.

\begin{figure*}[t!]
    \centering
    \begin{subfigure}[t]{0.475\textwidth}
        \includegraphics[width=1\linewidth]{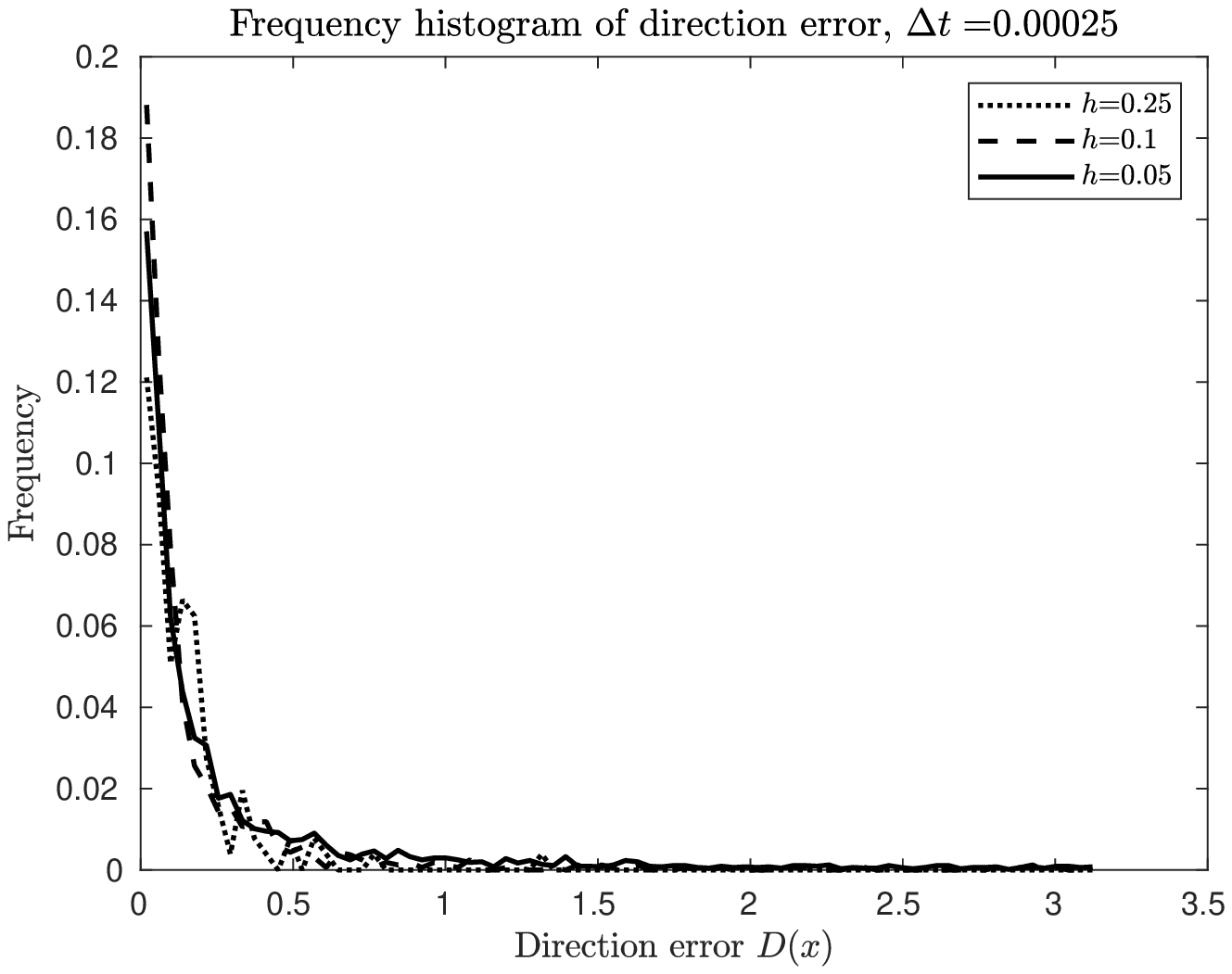}
        \caption{Frequency histogram of $D(x)$.}
    \label{figure:direction_error_frequency_histogram}
    \end{subfigure}%
    \hskip2ex
    \begin{subfigure}[t]{0.475\textwidth}
        \centering
        \includegraphics[width=1\linewidth]{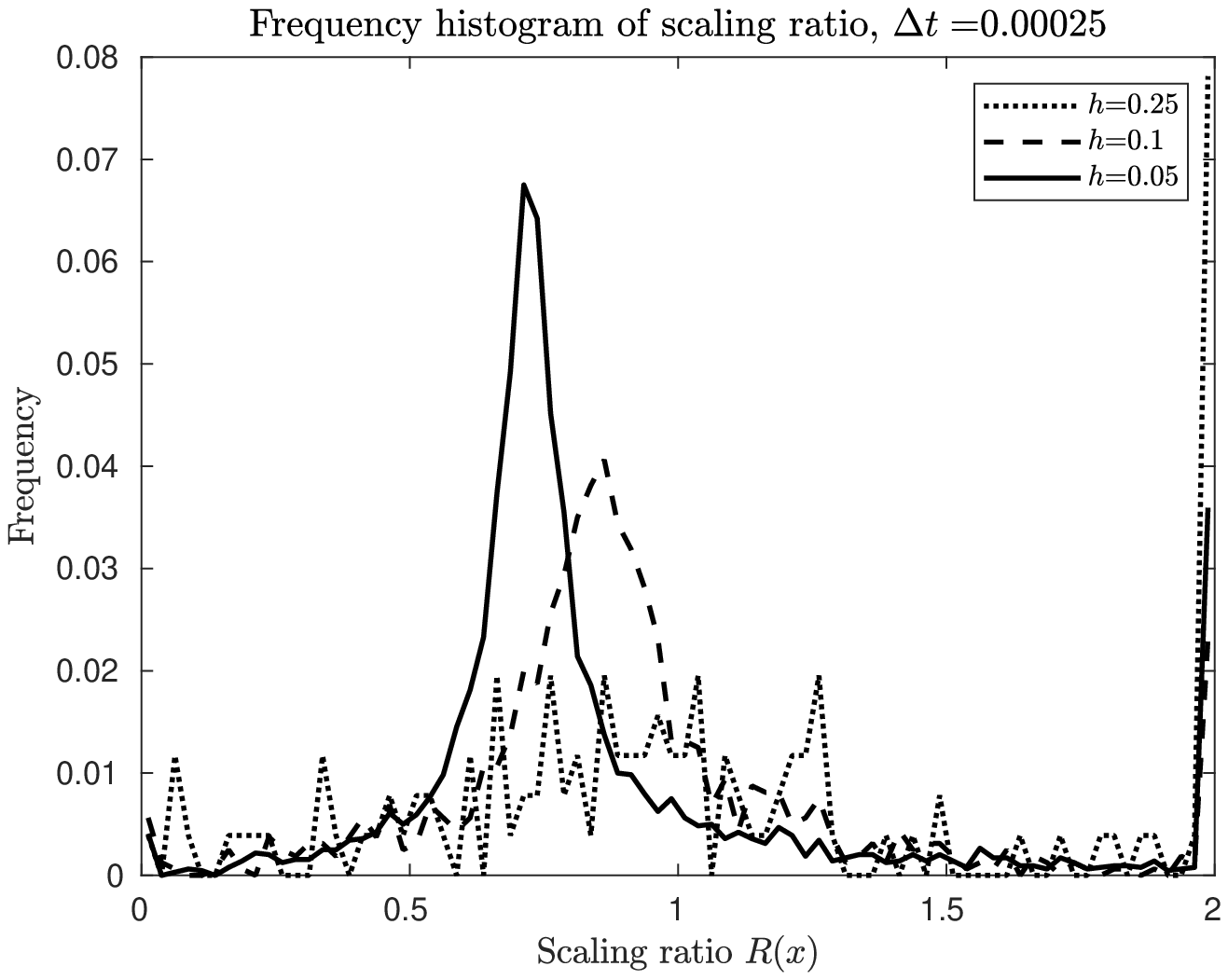}
        \caption{Frequency histogram of $R(x)$.}
    \label{figure:scaling_ratio_frequency_histogram}
    \end{subfigure}
    \caption{Frequency histograms of direction error $D(x)$ and scaling ratio $R(x)$ for $\Delta t=0.00025$ and $h\in\{0.05,0.1,0.25\}$.}
\end{figure*}

In Figure \ref{figure:direction_error_frequency_histogram} we observe that the frequency histograms of the direction error $D(x)$ are approximately the same for $h\in\{0.05,0.1,0.25\}$. In contrast, Figure \ref{figure:scaling_ratio_frequency_histogram} shows that the frequency histograms for the scaling ratio $R(x)$ differ in some aspects for varying $h$: as $h$ decreases from $h=0.25$ to $h=0.05$, the frequency histogram of the scaling ratio becomes more sharply peaked, and in addition the observed mean of the frequency histogram decreases away from 1. In addition, for $h=0.25$, the frequency of large outliers is at least twice the frequency for $h=0.1$ and $h=0.05$. 

Together, Figures \ref{figure:eqbm_weighted_L2_error_probcurrent}, \ref{figure:direction_error_frequency_histogram}, and \ref{figure:scaling_ratio_frequency_histogram} suggest that, relative to the direction error $D(x)$, the observed distribution of the scaling ratio $R(x)$ of the approximate probability current plays a more important role in explaining the discrepancy between the observed and predicted slopes of the numerical $L^2(\mu)$ error of the approximate probability current. To support this claim, we plot the direction error $D(x)$ and the absolute deviation $\vert R(x)-1\vert$ of the scaling ratio for the numerical results obtained with the parameters $h=0.25$ and $\Delta t=0.00025$ in Figures \ref{figure:direction_error_plot} and \ref{figure:deviation_scaling_ratio_plot} respectively. 

\begin{figure*}[!ht]
    \centering
    \begin{subfigure}[t]{0.475\textwidth}
        \includegraphics[width=1\linewidth]{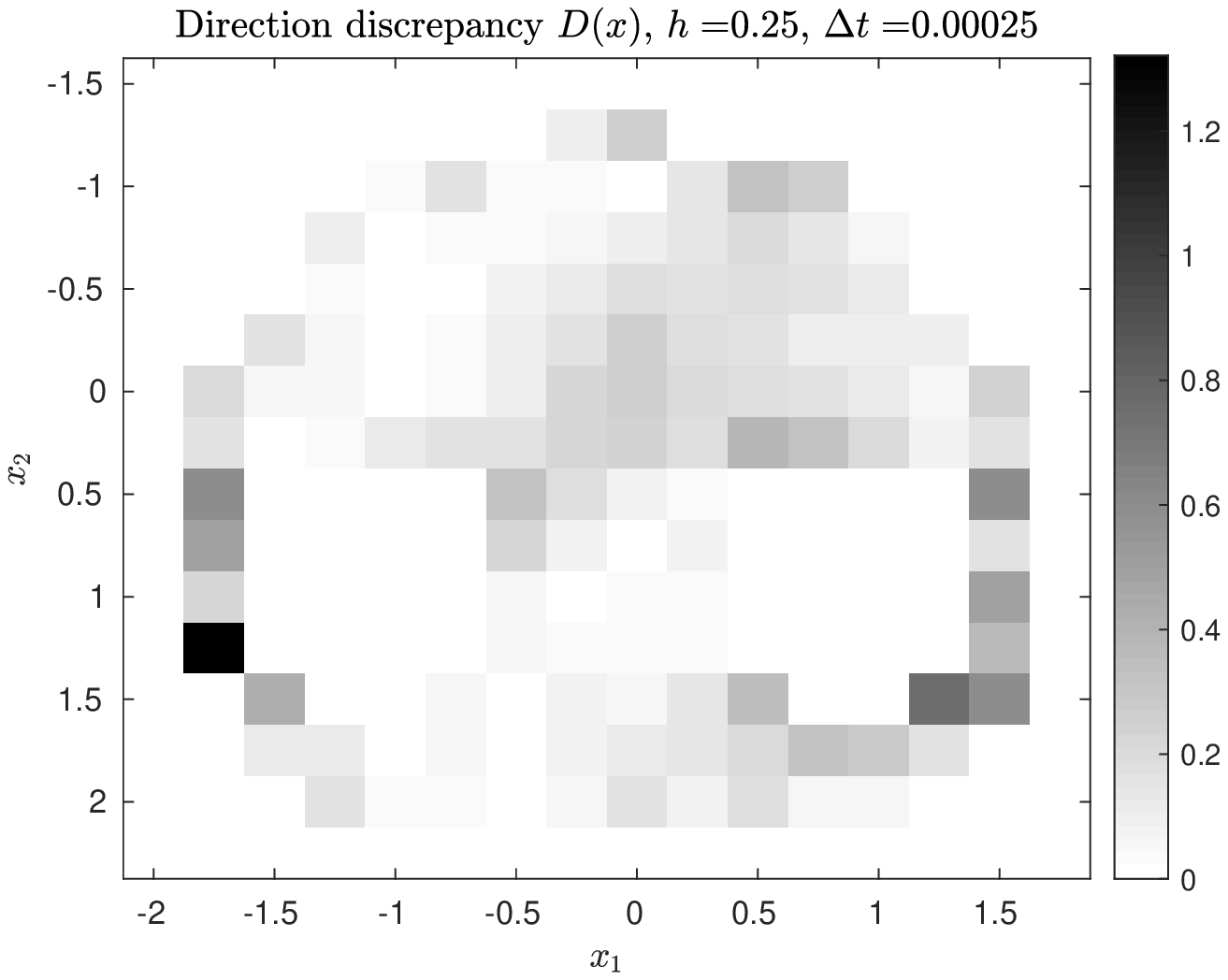}
        \caption{Direction error $D(x)$.}
    \label{figure:direction_error_plot}
    \end{subfigure}%
    \hskip2ex
    \begin{subfigure}[t]{0.475\textwidth}
        \centering
        \includegraphics[width=1\linewidth]{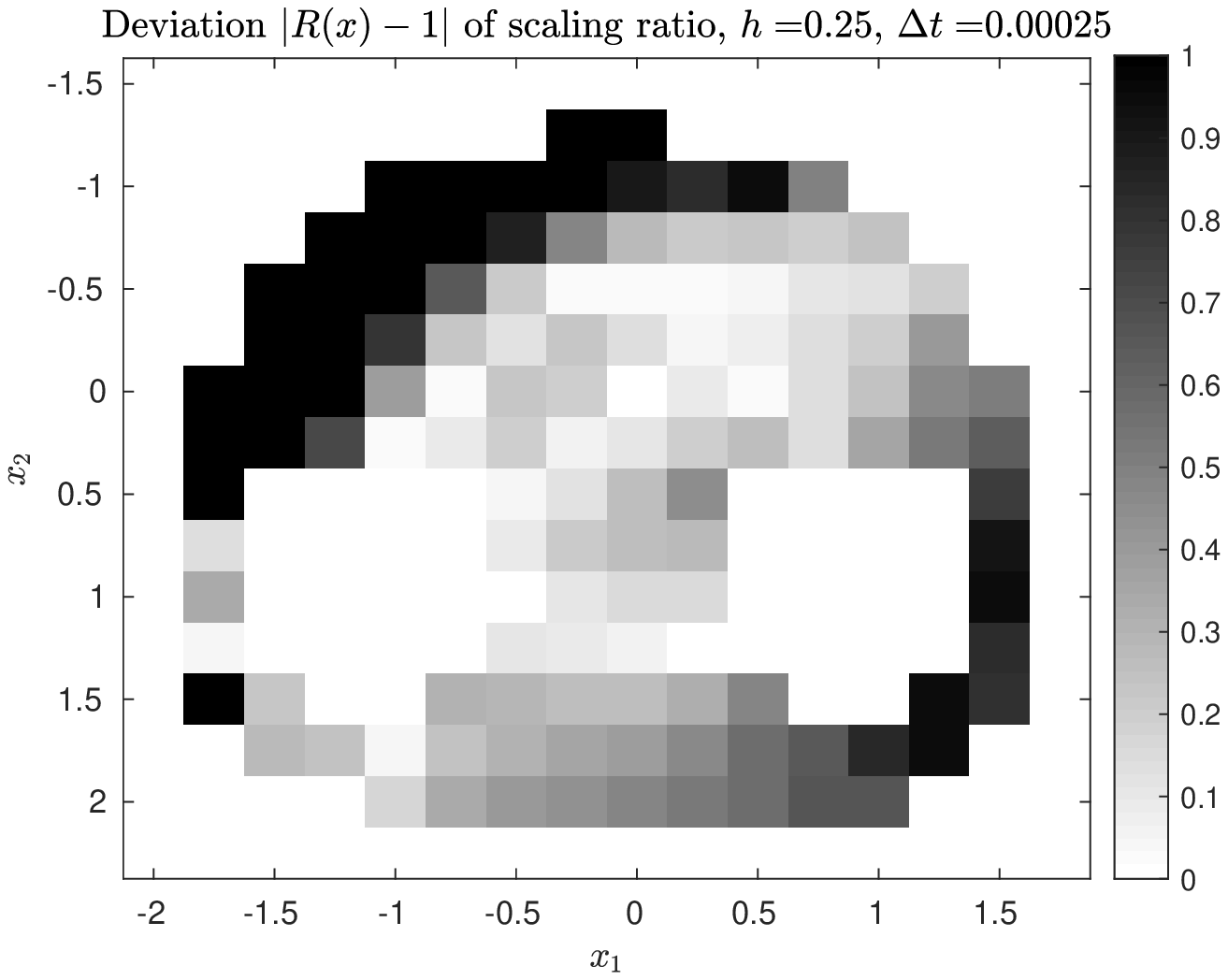}
        \caption{Deviation $\vert R(x)-1\vert$ of scaling ratio.}
    \label{figure:deviation_scaling_ratio_plot}
    \end{subfigure}
    \caption{Plots of direction error $D(x)$ and deviation $\vert R(x)-1\vert$ of scaling ratio for $h=0.25$ and $\Delta t=0.00025$. See \eqref{eq:discrepancy_metrics_probability_current} for definitions of $D(x)$ and $R(x)$.}
\end{figure*}

In Figure \ref{figure:direction_error_plot} we observe that the direction error is less than 0.5 over much of the region of interest. In contrast, in Figure \ref{figure:deviation_scaling_ratio_plot} the absolute deviation $\vert R(x)-1\vert$ is more than 0.9 over much of the region of interest. These figures suggest that, given enough reactive trajectory segments, our method can identify the direction of the probability current $J_{AB}$ better than the magnitude of the probability current. The errors in the estimation of the probability current tend to be larger over areas that the reactive trajectories visit less often, although there are exceptions. Remarkably, the plot analogous to Figure \ref{figure:deviation_scaling_ratio_plot} for $\Delta t=0.001$ (not shown) is very similar to the plot for $\Delta t=0.00025$ shown above; this suggests that the observation time window $s$ does not play a significant role in the observed deviations in scaling ratio. These observations suggest that too short a trajectory duration $T$, or equivalently, insufficient sampling of reactive trajectories explains the discrepancy between the observed slope and the predicted slope of the numerical $L^2(\mu)$ errors in the approximate probability current.

\section{Conclusion}
\label{sec:conclusion}

In this paper, we presented a method for approximating key objects from TPT for ergodic diffusion processes, namely the committor, probability current, and streamlines, using a Voronoi tessellation of state space and sample trajectory data. In the case of the approximate committor, only sample trajectory data are needed, and we can interpret the approximate committor as the committor of a non-Markovian jump process on the Delaunay graph $G$. For the approximate probability current and the approximate streamlines, sample \emph{reactive} trajectory data are needed, and we cannot interpret either object as a discrete analogue on $G$ of the corresponding object from TPT for diffusion processes. 

For each approximate object, we proved a bound on the approximation error, while neglecting for simplicity both statistical error due to finite sample sizes and errors due to the numerical method used for generating trajectories of the diffusion process. We used these bounds to prove that the approximate objects converge to the continuous objects from TPT in the continuum limit. We presented numerical results for the approximate committor that are consistent with our theoretical prediction of convergence behaviour. We presented numerical results for the approximate probability current, and explained their deviations from the predicted values in terms of the approximation of the $s\to 0^+$ and $T\to\infty$ limits in \eqref{eq:alpha_ik_def} with fixed values of $s$ and $T$.

The approach we have taken here is statistical and data-driven in nature, in the sense that our method relies only on computing observed statistics from sampled reactive trajectories and the Voronoi tessellation. This may be a disadvantage when reactive trajectory data is difficult to obtain. Furthermore, the numerical results in \S\ref{ssec:numerical_results_approximate_probability_current} showed that statistical error or under-sampling in regions of low probability can lead to slow convergence. On the other hand, the data-driven feature may be advantageous when the task of obtaining trajectory data is easier compared to the task of learning the drift and diffusion coefficients of the SDE.

Our method differs from methods for TPT that involve constructing a discrete state-space Markov process for approximating the underlying diffusion process. This feature comes at the cost of interpretability: the approximate probability current and streamlines cannot be interpreted as discrete TPT objects on the Delaunay graph associated to the Voronoi tessellation. On the other hand, we do not need to ensure that an approximating process obtained from discretisation of a continuous state space possesses the Markov property. In particular, we do not need to identify a suitable lag time, and can therefore avoid the preprocessing steps and heuristic reasoning that are often used to identify the lag time. In addition, the error and convergence analysis of the approximation error is simpler, because the definitions of the approximate objects of our method are not based on preserving certain properties on a graph, but on the continuous objects from TPT for diffusion processes.

We now mention some aspects that should be addressed in future work in order to improve the applicability of our method. 

In order to compute the approximate probability current, one needs to compute the $(d-1)$-dimensional volumes of the facets between adjacent Voronoi cells; see \eqref{eq:hat_alpha_ik}. Computing these volumes requires computing the full Voronoi tessellation, which will be expensive in high dimensions. One possible approach to circumvent this would be to use Voronoi tessellations in which every Voronoi cell is a rigid transformation of one Voronoi cell. However, this would prevent the user from incorporating a priori information about the SDE such as the energy landscape when placing generators for the tessellation. Thus, it would be desirable to modify the approach so that it is not necessary to compute the volume of every facet between adjacent cells. A possible approach would be to avoid a full tessellation of state space, e.g. by using core sets.

In \S\ref{ssec:numerical_results_approximate_probability_current}, we observed that the simple Monte Carlo approach that we used above could accurately estimate the committor probability, but not the probability current, over all regions in state space. One important reason for this is that it is easier to obtain the trajectory data needed for estimating committor probabilities than to obtain reactive trajectory data for estimating net fluxes: for the former, one starts a trajectory from a point or region of choice and waits until the trajectory reaches $A$ or $B$; for the latter, one starts a reactive trajectory segment on $\partial A$ and waits until it reaches $\partial B$ without re-entering $A$. Another important reason is that to construct the approximate probability current, one needs to estimate net fluxes across all facets. The accuracy of the net flux estimates depends strongly on how well the sampled reactive trajectories explore state space: the net flux estimates in regions that are explored less will be less accurate. It would be desirable to modify the approach so that sampling reactive trajectories is easier, and so that the sample reactive trajectories explore state space more uniformly. This could be achieved by using enhanced sampling techniques, for example. 

For applications where trajectory data is difficult to obtain, it would be of interest to extend the method so that it could be applied to sparse trajectory data, where `sparse' means that subsequent measurement times are far apart from each other, or where the trajectory data degenerates to a cloud of point data. In full generality, such a task appears intractable. However, under certain assumptions, e.g. the assumption that the points lie on an energy landscape that determines the dynamics, it might be possible to develop such an extension. For example, a recent analysis \cite{heida} has shown that applying the so-called square root approximation \cite{lie_voronoi} to generate an approximate transition rate matrix from point cloud data on an energy landscape produces an approximation of the associated infinitesimal generator. It would be of interest to modify the definition of the approximate probability current, in order to make our approach applicable to such data.

\section*{Acknowledgments}
NC and TC were supported by project grant CH14 of the Einstein Center for Mathematics Berlin (ECMath), and by the DFG Research Center Matheon ``\emph{Mathematics for key technologies}'' in Berlin. TC is supported by the German Ministry of Research and Education (BMBF) project grant 3FO18501 (Forschungscampus MODAL). The research of NC and HCL has been partially funded by Deutsche Forschungsgemeinschaft (DFG) - SFB1294/1 - 318763901.

The authors thank Christof Sch\"{u}tte and Gabriel Stoltz for their helpful feedback on early versions of this work. 

\bibliographystyle{plain}
\bibliography{cvgt_disc_tpt}

\appendix
\section{Auxiliary results and proofs}
\label{sec:appendix}

\subsection{Approximate committor}
\label{ssec:appendix_committor}

The purpose of this section is to prove Proposition \ref{proposition_committors_equal}, i.e. that for a given tessellation $\{S_i\}_{i\in I}$, $\hat{q}_i = \tilde{q}_i$ for all $i\in I$. We accomplish this by using the notion of a regular conditional distribution. To define a regular conditional distribution, we first recall some definitions; see \cite[Section 8.3]{klenke2008probability}.
\begin{definition}[Stochastic kernel]
\label{definition:stochastic_kernel}
	Let $(\Omega_1,\mathcal{A}_1)$ and $(\Omega_2,\mathcal{A}_2)$ be measurable spaces. A map $\kappa: \Omega_1\times \mathcal{A}_2 \rightarrow \left[0,\infty\right]$ is called a \emph{stochastic kernel} from $\Omega_1$ to $\Omega_2$ if\begin{itemize}
		\item [(i)] $\kappa(\cdot,A_2)$ is $\mathcal{A}_1$-measurable for any $A_2\in \mathcal{A}_2$, and
		\item [(ii)] $\kappa(\omega_1,\cdot)$ is a $\sigma-$finite probability measure on $(\Omega_2,\mathcal{A}_2)$ for any $\omega_1 \in \Omega_1$.
	\end{itemize}
\end{definition}

\begin{definition}[Regular conditional distribution]
\label{definition:RCD}
	Let $(\Omega_1,\mathcal{A}_1,\mathbb{P})$ be a probability space, $(E,\mathcal{E})$ be a measurable space, and $Y$ be an $E$-valued random variable on $(\Omega_1,\mathcal{A}_1,\mathbb{P})$. Let $\mathcal{F}\subset \mathcal{A}_1$ be a $\sigma$-algebra on $\Omega_1$. A stochastic kernel $\kappa_{Y,\mathcal{F}}$ from $(\Omega_1,\mathcal{F})$ to $(E,\mathcal{E})$ is called a \emph{regular conditional distribution of $Y$ given $\mathcal{F}$} if 
	\begin{equation}\label{RCD_condition}
		\kappa_{Y,\mathcal{F}} (\omega,C) = \mathbb{P}\left(Y\in C \vert \mathcal{F}\right) (\omega)
	\end{equation} for $\mathbb{P}-$almost all $\omega \in \Omega_1$ and for all $C\in \mathcal{E}$. 
	
	If $\mathcal{F}$ is generated by a random variable $X$ defined on $(\Omega_1,\mathcal{A}_1,\mathbb{P})$ that takes values in some measurable space $(E',\mathcal{E}')$ then the stochastic kernel $\kappa_{Y,X}$ from $(E',\mathcal{E}')$ to $(E,\mathcal{E})$ that is defined by
	\begin{equation}
	\label{def_RCD_of_Y_given_X}
	\kappa_{Y,X}(x,C) = \mathbb{P}\left(Y\in C  \vert X = x \right) = \kappa_{Y,\sigma(X)}(\omega,C),\quad \omega\in X^{-1}(\{x\})
	\end{equation}
	 is called a \emph{regular conditional distribution of $Y$ given $X$}.
\end{definition}
The existence of the regular conditional distribution of $Y$ given $\mathcal{F}$ for a random variable $Y$ taking values in a Borel space is given in \cite[Theorem 8.37]{klenke2008probability}.

\begin{lemma}[Committors are regular conditional probabilities]\label{lemma:committors_are_rcp}
The committor function $q$ defined in \eqref{eq:forward_committor} is a regular conditional probability.
\end{lemma}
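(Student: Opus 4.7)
The plan is to reinterpret the committor $q$ as the parameter of a family of Bernoulli measures and verify the two defining properties of a regular conditional distribution from Definitions~\ref{definition:stochastic_kernel} and \ref{definition:RCD}.

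First I would introduce the auxiliary random variable $Y := \mathbf{1}_B\bigl(X_{\tau_{A\cup B}(X)}\bigr)$ on $(\Omega,\mathcal{F},\mathbb{P})$, which takes values in $E := \{0,1\}$ equipped with its power-set $\sigma$-algebra $\mathcal{E}$. By the definition \eqref{eq:forward_committor} of the committor, $q(x) = \mathbb{P}(Y=1 \mid X_0 = x)$. I would then define the candidate stochastic kernel
\begin{equation*}
\kappa_{Y,X_0}: S\times \mathcal{E} \to [0,1],\qquad \kappa_{Y,X_0}(x,C) := q(x)\,\mathbf{1}_C(1) + (1-q(x))\,\mathbf{1}_C(0),
\end{equation*}
and show that it is a regular conditional distribution of $Y$ given $X_0$ in the sense of Definition \ref{definition:RCD}.

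The next step is to verify that $\kappa_{Y,X_0}$ satisfies both clauses of Definition \ref{definition:stochastic_kernel}. Clause (ii) is immediate: for every fixed $x\in S$, $\kappa_{Y,X_0}(x,\cdot)$ is, by construction, a convex combination of the Dirac measures $\delta_0$ and $\delta_1$, hence a probability measure on $(E,\mathcal{E})$. For clause (i), it suffices to check measurability of $x\mapsto q(x)$, because any $C\in\mathcal{E}$ is a union of points in $\{0,1\}$ and $\kappa_{Y,X_0}(\cdot,C)$ is a $\bb{R}$-linear combination of $q$ and $1-q$. Measurability of $q$ follows from the fact, recalled in Section~\ref{sec:intro}, that $q$ solves a backward Kolmogorov boundary value problem with continuous boundary data and is in particular continuous on $S$ (and hence Borel measurable).

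The key step is verifying the defining identity \eqref{def_RCD_of_Y_given_X}, i.e.\ that for $\mathbb{P}$-a.a.\ $\omega$ and every $C\in\mathcal{E}$,
\begin{equation*}
\kappa_{Y,X_0}(X_0(\omega),C) = \mathbb{P}(Y\in C \mid \sigma(X_0))(\omega).
\end{equation*}
For $C\in\{\emptyset, \{0,1\}\}$ both sides are $0$ or $1$ trivially. For $C=\{1\}$ the identity reduces to $q(X_0(\omega)) = \mathbb{P}(Y=1 \mid \sigma(X_0))(\omega)$, which is precisely the statement that $q$, evaluated at the initial condition, is a version of the conditional probability $\mathbb{P}(Y=1 \mid \sigma(X_0))$; this follows from the defining relation $q(x) = \mathbb{P}(Y=1\mid X_0 = x)$ together with the standard factorisation of conditional expectation of a Markov functional through $X_0$. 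The case $C=\{0\}$ follows by taking complements. I expect the only genuinely delicate point in the whole argument to be articulating carefully, from the existence of a Borel measurable $q$ together with the Markov property of $X$, that the map $\omega\mapsto q(X_0(\omega))$ furnishes an honest version of $\mathbb{P}(Y=1\mid \sigma(X_0))$ (rather than merely matching values on each fiber $\{X_0 = x\}$); once this is established, the stochastic-kernel and RCD properties follow immediately from the preceding steps.
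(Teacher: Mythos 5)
Your proof is essentially correct, but it takes a noticeably more elaborate route than the paper's. The paper's proof is a two-line identification of notation: it sets $Y = X_{\tau_{A\cup B}(X)}$ (an $S$-valued random variable), $X = X_0$, and $C = B$ in \eqref{def_RCD_of_Y_given_X}, so that $q(x) = \kappa_{X_{\tau_{A\cup B}(X)},X_0}(x,B)$ by definition, with existence of the kernel supplied wholesale by the cited Theorem 8.37 of Klenke (since $X_{\tau_{A\cup B}(X)}$ takes values in a Borel space). You instead pass to the two-point-valued random variable $\mathbf{1}_B(X_{\tau_{A\cup B}(X)})$, write down the Bernoulli kernel explicitly, and verify the stochastic-kernel axioms and the defining a.s.\ identity by hand. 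Both arguments are valid; yours is more self-contained (it does not lean on the abstract existence theorem, only on measurability of $q$), while the paper's is shorter and works at the level of the full distribution of $X_{\tau_{A\cup B}(X)}$ rather than only its $B$-versus-not-$B$ dichotomy.

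One caveat on the step you flag as delicate. The identity $q(X_0) = \mathbb{P}(Y=1\mid\sigma(X_0))$ a.s.\ cannot really be \emph{derived} from the pointwise formula $q(x)=\mathbb{P}(Y=1\mid X_0=x)$, because when $\mathbb{P}(X_0=x)=0$ that formula has no elementary meaning: its only rigorous reading is as the evaluation of a regular conditional distribution, i.e.\ precisely the conclusion you are after. The paper resolves this by fiat --- \eqref{eq:forward_committor} is read as an instance of the notation introduced in \eqref{def_RCD_of_Y_given_X} --- so the lemma is an observation about definitions rather than a theorem requiring the Markov property. If you insist on defining $q$ independently (say via the backward Kolmogorov PDE or via the family $\mathbb{P}_x$ of laws started at $x$), then your sketch is the right shape, but the factorisation through $X_0$ would then need the Markov property and a genuine argument, not just the pointwise relation. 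As written, your proof reaches the correct conclusion, and you correctly locate where the real content (or the definitional shortcut) sits.
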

\begin{proof}
    Set the random variables $X,Y, $ and the set $C$ in \eqref{def_RCD_of_Y_given_X} to be equal  $X_0$, $X_{\tau_{A\cup B}(X)}$, and $B$, respectively. Then 
    $$\kappa_{X_{\tau_{A\cup B}(X)}, X_0}(x,B) = \bb{P}(X_{\tau_{A\cup B}(X)} \in B \vert X_0 \in x) = q(x),$$
    where we used the definition \eqref{eq:forward_committor} of the committor function in the second equation.
\end{proof}

The following result is reproduced from \cite[Theorem 8.38]{klenke2008probability}.
\begin{theorem}
	\label{theorem_cond_expectation_with_RCD}
	Let $Y$ be a random variable on $(\Omega_1, \mathcal{A}_1,\mathbb{P})$ with values in some set $E$, and equip $E$ with the Borel $\sigma$-algebra $\mathcal{E}$. Let $\mathcal{F}\subset \mathcal{A}_1$ be a $\sigma$-algebra and let $\kappa_{Y,\mathcal{F}}$ be a regular conditional distribution of $Y$ given $\mathcal{F}$. Further, let $f:E\rightarrow \mathbb{R}$ be measurable and $\mathbb{E}\left[|f(Y)| \right]<\infty.$ Then 
	\begin{equation*} 
	 \mathbb{E}\left[f(Y)\vert\mathcal{F} \right](\omega) = \int_{E} f(y)\kappa_{Y,\mathcal{F}}(\omega,\mathrm{d}y)
	\end{equation*}
$\mathbb{P}$-almost surely.
\end{theorem}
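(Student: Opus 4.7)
The plan is to prove the identity by the standard algebraic induction of measure theory, extending the conclusion from indicator functions to simple functions, then to non-negative measurable functions, and finally to integrable $f$. First, for $f = \mathbf{1}_C$ with $C \in \mathcal{E}$, I observe that by the definition of Lebesgue integration against the measure $\kappa_{Y,\mathcal{F}}(\omega,\cdot)$ (which is a probability measure by Definition \ref{definition:stochastic_kernel}(ii)),
\begin{equation*}
\int_E \mathbf{1}_C(y)\kappa_{Y,\mathcal{F}}(\omega,\mathrm{d}y) = \kappa_{Y,\mathcal{F}}(\omega,C),
\end{equation*}
for every $\omega \in \Omega_1$. By the defining property \eqref{RCD_condition} of a regular conditional distribution, the right-hand side equals $\mathbb{P}(Y\in C\mid \mathcal{F})(\omega) = \mathbb{E}[\mathbf{1}_C(Y)\mid \mathcal{F}](\omega)$ for $\mathbb{P}$-almost every $\omega$. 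Definition \ref{definition:stochastic_kernel}(i) also ensures $\mathcal{F}$-measurability of the left-hand side.

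Second, I extend to simple functions $f = \sum_{k=1}^n c_k \mathbf{1}_{C_k}$ with $c_k\in\mathbb{R}$ and $C_k\in\mathcal{E}$ by invoking linearity of the integral against $\kappa_{Y,\mathcal{F}}(\omega,\cdot)$ (for each fixed $\omega$) together with linearity of conditional expectation; the exceptional $\mathbb{P}$-null set is the finite union of the exceptional sets from the $n$ base-case applications. For non-negative measurable $f \colon E \to [0,\infty]$, I approximate from below by an increasing sequence $(f_n)$ of non-negative simple functions with $f_n \uparrow f$ pointwise. The monotone convergence theorem applied pointwise in $\omega$ to the measure $\kappa_{Y,\mathcal{F}}(\omega,\cdot)$ gives
\begin{equation*}
\int_E f_n(y)\kappa_{Y,\mathcal{F}}(\omega,\mathrm{d}y)\ \uparrow\ \int_E f(y)\kappa_{Y,\mathcal{F}}(\omega,\mathrm{d}y),
\end{equation*}
while the conditional monotone convergence theorem yields $\mathbb{E}[f_n(Y)\mid\mathcal{F}]\uparrow \mathbb{E}[f(Y)\mid\mathcal{F}]$ $\mathbb{P}$-a.s. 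Taking the countable union of the $\mathbb{P}$-null sets from the second-step identities applied to each $f_n$ still gives a $\mathbb{P}$-null set, off which the desired identity passes to the limit. Measurability of the $\omega$-map is preserved as the monotone limit of $\mathcal{F}$-measurable functions.

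Finally, for general measurable $f$ with $\mathbb{E}[|f(Y)|]<\infty$, I decompose $f = f^+ - f^-$ into non-negative measurable parts. The integrability assumption together with Tonelli's theorem (applied to the identity from the third step for $f^\pm$) shows that $\omega\mapsto\int_E f^\pm(y)\kappa_{Y,\mathcal{F}}(\omega,\mathrm{d}y)$ and $\mathbb{E}[f^\pm(Y)\mid\mathcal{F}]$ are all $\mathbb{P}$-a.s. finite, so the subtraction is well-defined off a $\mathbb{P}$-null set, yielding the claim.

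The only real subtlety is bookkeeping of null sets: a priori the exceptional set in \eqref{RCD_condition} depends on $C$, so one must check that a single $\mathbb{P}$-null set can be chosen independently of $f$ in the final statement. This is handled entirely by the observation that countable unions of $\mathbb{P}$-null sets are $\mathbb{P}$-null, which is enough because (a) the approximating sequence $(f_n)$ in the monotone convergence step is countable, and (b) the final extension uses only two applications of the non-negative case. There is no other obstacle; the argument is the classical "standard machine" and does not require any additional topological or measure-theoretic hypothesis on $E$ beyond those already embedded in the existence of $\kappa_{Y,\mathcal{F}}$.
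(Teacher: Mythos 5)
Your argument is correct: it is the classical ``standard machine'' (indicators via \eqref{RCD_condition}, then simple functions, then monotone convergence for non-negative $f$, then $f = f^+ - f^-$), with the null-set bookkeeping handled properly through countable unions. The paper itself offers no proof of this statement --- it reproduces it verbatim from Klenke (Theorem 8.38) --- and your proof is essentially the one given in that reference, so there is nothing to compare beyond noting that your appeal to Tonelli in the last step is unnecessary: the $\mathbb{P}$-a.s.\ finiteness of $\mathbb{E}[f^{\pm}(Y)\mid\mathcal{F}]$ already follows from $\mathbb{E}\bigl[\mathbb{E}[f^{\pm}(Y)\mid\mathcal{F}]\bigr]=\mathbb{E}[f^{\pm}(Y)]<\infty$.
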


We use Theorem \ref{theorem_cond_expectation_with_RCD} to prove the following lemma, which expresses the joint probability of a pair of random variables $X$ and $Y$ in terms of the regular conditional probability of $Y$ given $X$.
\begin{lemma}
\label{lemma:RCD_for_committors}
	Let $X$ and $Y$ be random variables on $(\Omega_1, \mathcal{A}_1,\mathbb{P})$, taking values in the measurable spaces $(E',\mathcal{E}')$ and $(E,\mathcal{E})$ respectively. Then for any $C\in \mathcal{E}$ and $D\in\mathcal{E}'$,
	\begin{equation*}
	\mathbb{P}\left(Y\in C,X\in D\right) = \int_{D}
	\kappa_{Y,X}
	(x,C)\mathbb{P}\circ X^{-1}(\mathrm{d}x).
	\end{equation*}
\end{lemma}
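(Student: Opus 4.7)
The plan is to rewrite the joint probability as an expectation of an indicator, apply the tower property conditional on $\sigma(X)$, replace the conditional expectation by an integral against the regular conditional distribution $\kappa_{Y,X}$ using Theorem \ref{theorem_cond_expectation_with_RCD}, and finally transport the outer expectation over $\Omega_1$ to an integral over $E'$ via the image measure $\mathbb{P}\circ X^{-1}$.

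More concretely, I would first write
\begin{equation*}
\mathbb{P}(Y\in C,\,X\in D) = \mathbb{E}\bigl[\mathbf{1}_{\{Y\in C\}}\mathbf{1}_{\{X\in D\}}\bigr]
= \mathbb{E}\bigl[\mathbb{E}[\mathbf{1}_{\{Y\in C\}}\mathbf{1}_{\{X\in D\}}\mid\sigma(X)]\bigr],
\end{equation*}
and then pull out the $\sigma(X)$-measurable factor $\mathbf{1}_{\{X\in D\}}$. Applying Theorem \ref{theorem_cond_expectation_with_RCD} with $f=\mathbf{1}_C$ and $\mathcal{F}=\sigma(X)$ gives
\begin{equation*}
\mathbb{E}[\mathbf{1}_{\{Y\in C\}}\mid\sigma(X)](\omega) = \int_E \mathbf{1}_C(y)\,\kappa_{Y,\sigma(X)}(\omega,\mathrm{d}y) = \kappa_{Y,\sigma(X)}(\omega,C),
\end{equation*}
and by the definition \eqref{def_RCD_of_Y_given_X} of $\kappa_{Y,X}$, this equals $\kappa_{Y,X}(X(\omega),C)$ for $\mathbb{P}$-almost every $\omega$.

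Substituting back, the joint probability becomes $\mathbb{E}[\mathbf{1}_{\{X\in D\}}\kappa_{Y,X}(X,C)]$. The last step is a standard change-of-variables argument: the map $\omega\mapsto \mathbf{1}_D(X(\omega))\,\kappa_{Y,X}(X(\omega),C)$ is the composition of $X$ with the measurable function $x\mapsto\mathbf{1}_D(x)\kappa_{Y,X}(x,C)$ on $E'$ (measurability in $x$ is guaranteed by part (i) of Definition \ref{definition:stochastic_kernel}), so
\begin{equation*}
\mathbb{E}\bigl[\mathbf{1}_{\{X\in D\}}\kappa_{Y,X}(X,C)\bigr] = \int_{E'}\mathbf{1}_D(x)\,\kappa_{Y,X}(x,C)\,\mathbb{P}\circ X^{-1}(\mathrm{d}x) = \int_D \kappa_{Y,X}(x,C)\,\mathbb{P}\circ X^{-1}(\mathrm{d}x),
\end{equation*}
which is the claimed identity.

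There is no real obstacle here: the argument is essentially unrolling the definition of a regular conditional distribution. The only point that requires a little care is the implicit appeal to the $\mathbb{P}$-a.s.\ equivalence in \eqref{def_RCD_of_Y_given_X}, which is fine because the outer expectation is taken with respect to $\mathbb{P}$ and the exceptional null set is absorbed. The measurability needed to invoke the image-measure formula is precisely condition (i) in Definition \ref{definition:stochastic_kernel}, so no further hypothesis on $(E',\mathcal{E}')$ or $(E,\mathcal{E})$ is needed beyond the existence of $\kappa_{Y,X}$.
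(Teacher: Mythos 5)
Your proof is correct and follows essentially the same route as the paper's: rewrite the joint probability as an expectation of indicators, condition on $\sigma(X)$ and pull out the $\sigma(X)$-measurable factor, identify the conditional expectation with $\kappa_{Y,X}(X(\cdot),C)$ via Theorem \ref{theorem_cond_expectation_with_RCD} and \eqref{def_RCD_of_Y_given_X}, and finish with the change-of-variables formula for the image measure. Your remarks on measurability and the a.s.\ null set are accurate and slightly more explicit than the paper's version.
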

\begin{proof}
Let $\sigma(X)$ denote the $\sigma$-algebra generated by $X$. Then
\begin{align}
    \mathbb{P}(Y\in C, X\in D) & = \mathbb{E}[\mathbf{1}_D(X)\mathbf{1}_C(Y)] =\mathbb{E}\left[\mathbf{1}_D(X)\mathbb{E}[\mathbf{1}_C(Y)\vert \sigma(X)] \right] .
    \label{eq:RCD_for_committors_4th}
\end{align}
Observe that by Theorem \ref{theorem_cond_expectation_with_RCD}, \eqref{def_RCD_of_Y_given_X}, and Definition \ref{definition:stochastic_kernel},
\begin{align}\label{eq:RCD_for_committors_2nd}
\begin{split}
    \mathbb{E}\left[\mathbf{1}_C(Y)\vert \sigma(X) \right](\omega) &= \int_E \mathbf{1}_C(y)\kappa_{Y,\sigma(X)}(\omega,\mathrm{d}y) = \int_E \mathbf{1}_C(y)\kappa_{Y,X}(X(\omega),\mathrm{d}y)\\
    &= \kappa_{Y,X}(X(\omega),C).
    \end{split}
\end{align}
Therefore,
\begin{align*}
    \mathbb{P}(Y\in C, X\in D) &=  \int_{\Omega}\mathbf{1}_D \left(X(\omega) \right) \kappa_{X,Y}\left(X(\omega),C\right) \mathbb{P}(\mathrm{d}\omega) \\
    & = \int_{E}\mathbf{1}_D(x)\kappa_{X,Y}(x,C)\mathbb{P}\circ X^{-1}(\mathrm{d}x),
\end{align*}
where we used \eqref{eq:RCD_for_committors_4th} and \eqref{eq:RCD_for_committors_2nd} in the first equation and the change of variables formula in the second equation. This completes the proof.
 \end{proof}

We now use Lemma \ref{lemma:RCD_for_committors} to prove Proposition \ref{proposition_committors_equal}, which states that for every $i\in I$, the quantities $\hat{q}_i$ and $\tilde{q}_i$ defined in \eqref{proj_committor} and \eqref{eq:approximate_committor_ith_component} are equal, under the assumption that $\bb{P}\circ (X_0)^{-1}=\mu$.

\begin{proof}[Proof of Proposition \ref{proposition_committors_equal}]
	Let $i\in I$ be arbitrary. Recall from \eqref{proj_committor} that 
	\begin{equation*}
	\hat{q}_i=\frac{1}{\mu(S_i)}\langle q,\mathbf{1}_{S_i} \rangle_{\mu} = \frac{1}{\mu(S_i)} \int_{S}q(x)\mathbf{1}_{S_i}(x)\mu(\mathrm{d}x).
	\end{equation*}
	The definition \eqref{eq:approximate_committor_ith_component}, the construction of $Y$ and Lemma \ref{lemma:tau_AB_equals_tau_JK}, and the hypothesis that $\bb{P}\circ (X_0)^{-1}=\mu$ imply that 
	\begin{align*}
	\tilde{q}_i &= \frac{\mathbb{P}\left( Y_{\tau_{J\cup K}(Y)}\in K,\  Y_0= i \right)}{\mathbb{P}\left( Y_0 =i\right)} = \frac{\mathbb{P}\left( X_{\tau_{A\cup B}(X)}\in B,\  X_0 \in S_i \right)}{\bb{P}(X_0 \in S_i)}
	\\ &= \frac{\mathbb{P}\left( X_{\tau_{A\cup B}(X)}\in B,\  X_0 \in S_i \right)}{\mu(S_i)}.
	\end{align*}
	Thus, it suffices to show that 
	\begin{equation*}
	 \int_{S}q(x)\mathbf{1}_{S_i}(x)\mu(\mathrm{d}x)=\mathbb{P}\left( X_{\tau_{A\cup B}(X)}\in B,\  X_0 \in S_i \right).
	\end{equation*}
	By Lemma \ref{lemma:committors_are_rcp}, the left-hand side can be rewritten as
	\begin{align*}
	 \int_{S}q(x)\mathbf{1}_{S_i}(x)\mu(\mathrm{d}x)&=\int_{S_i}\kappa_{X_{\tau_{A\cup B}(X)},X_0}(x,B)\mu(\mathrm{d}x).
	\end{align*}
	Using that $\mu=\mathbb{P}\circ (X_0)^{-1}$ and Lemma \ref{lemma:RCD_for_committors}, we obtain
	\begin{align*}
	 \int_{S_i}\kappa_{X_{\tau_{A\cup B}(X)},X_0}(x,B)\mu(\mathrm{d}x) & =\int_{S_i}\kappa_{X_{\tau_{A\cup B}(X)},X_0}(x,B)\mathbb{P}\circ X_{0}^{-1}(\mathrm{d}x)\\ &=\mathbb{P}(X_{\tau_{A\cup B}(X)}\in B,X_0\in S_i),
	\end{align*}
	yielding the desired conclusion.
\end{proof}

\subsection{Approximate probability current}
\label{ssec:appendix_probability_current}

In this section, we consider $d\in\bb{N}$, $d\geq 2$. By `$d$-polytope', we mean a bounded convex polytope $P$ with $\dim{P}=d$.

\begin{lemma}
\label{lemma:d_polytopes_have_dplus1_facets}
	If $P\subset\bb{R}^d$ is a $d$-polytope, then $P$ at least $d+1$ facets.
\end{lemma}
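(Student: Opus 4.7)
The plan is to argue by contradiction using the outer-normal representation of $P$. Since $P$ is a $d$-polytope, it admits an irredundant representation $P=\bigcap_{i=1}^{k}\{x\in\bb{R}^d\ :\ \langle a_i,x\rangle\leq b_i\}$, in which $k$ is precisely the number of facets and each $a_i$ is a nonzero outer normal to the $i$-th facet. I would suppose for contradiction that $k\leq d$, and then exhibit a direction along which $P$ is unbounded, contradicting boundedness of $P$.

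First I would establish the linear-algebra claim that there exists $v\in\bb{R}^d\setminus\{0\}$ with $\langle a_i,v\rangle\leq 0$ for all $i$. If $k<d$, then $a_1,\ldots,a_k$ fail to linearly span $\bb{R}^d$, so any nonzero $v$ in their orthogonal complement works, since then $\langle a_i,v\rangle=0$ for all $i$. If $k=d$, I would argue that $\{a_1,\ldots,a_d\}$ cannot positively span $\bb{R}^d$: positive spanning forces linear spanning, hence linear independence of these $d$ vectors in $\bb{R}^d$; but then writing $-a_1=\sum_{i=1}^{d} c_i a_i$ with $c_i\geq 0$ would contradict the uniqueness of coordinates in the basis $(a_1,\ldots,a_d)$. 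Hence $\{a_1,\ldots,a_d\}$ does not positively span, so the polar cone $\{v:\langle a_i,v\rangle\leq 0,\ \forall i\}$ is nontrivial and a nonzero $v$ exists.

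Having produced $v$, I would fix any $x_0\in P$ and observe that for each $t\geq 0$ and each $i$,
\begin{equation*}
\langle a_i,x_0+tv\rangle = \langle a_i,x_0\rangle + t\langle a_i,v\rangle \leq b_i,
\end{equation*}
so the entire ray $\{x_0+tv:t\geq 0\}$ lies in $P$, contradicting boundedness. The main obstacle is the appeal to the irredundant halfspace representation of $P$ in which the number of halfspaces equals the number of facets; this is a standard fact in polytope theory (each facet lies on a unique supporting hyperplane, and omitting the halfspace corresponding to a facet strictly enlarges the intersection). Once this representation is in hand, the rest of the argument is the short linear-algebra observation above, essentially the well-known fact that positive spanning of $\bb{R}^d$ requires at least $d+1$ vectors.
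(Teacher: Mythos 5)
Your proof is correct, but it takes a genuinely different route from the paper. The paper's proof is a two-line argument: decompose the $d$-polytope into $d$-simplices and invoke the fact that each $d$-simplex has $d+1$ facets. You instead work from the irredundant halfspace representation $P=\bigcap_{i=1}^{k}\{x:\langle a_i,x\rangle\leq b_i\}$, in which $k$ equals the number of facets, and show that $k\leq d$ normals can never positively span $\bb{R}^d$ (for $k<d$ by failure of linear spanning; for $k=d$ by the basis-coordinates argument applied to $-a_1$), so the polar cone $\{v:\langle a_i,v\rangle\leq 0\ \forall i\}$ contains a nonzero $v$ and the ray $\{x_0+tv:t\geq 0\}$ stays in $P$, contradicting boundedness. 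Both arguments rest on a standard structural fact about polytopes — the existence of a triangulation in one case, the facet--halfspace correspondence in the other — but yours has two advantages in this context: it avoids the slightly delicate inference from the facet count of the simplices in a triangulation to the facet count of $P$ itself (the facets of those simplices are generally not facets of $P$), and it runs on exactly the same unbounded-ray mechanism that the paper uses immediately afterwards to prove Corollary \ref{corollary:rank_of_any_cell_is_d}, so the two results could share a single argument. The paper's route buys brevity and requires no discussion of normal vectors at all. Your positive-spanning step in the case $k=d$ and the appeal to the nontriviality of the polar cone (via bipolarity or separation from the closed, finitely generated cone) are both sound.
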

\begin{proof}
Any such polytope admits a decomposition into $d$-simplices. Since every $d$-simplex has $d+1$ facets, the conclusion follows.
\end{proof}

\begin{corollary}
\label{corollary:rank_of_any_cell_is_d}
      If $P$ is a $d$-polytope in $\bb{R}^d$, then $P$ has at least $d$ linearly independent outer normals. In particular, the matrix $N$ whose rows are the outer unit normals to $P$ has full rank.
\end{corollary}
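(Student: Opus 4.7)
The plan is to deduce the corollary from Lemma \ref{lemma:d_polytopes_have_dplus1_facets} together with a standard fact from convex geometry: a bounded convex $d$-polytope $P\subset\bb{R}^d$ admits a half-space representation $P=\{x\in\bb{R}^d\ :\ n_i^\top x\leq b_i,\ i=1,\ldots,m\}$, where $n_1,\ldots,n_m$ are precisely the outer unit normals to its $m$ facets. By Lemma \ref{lemma:d_polytopes_have_dplus1_facets}, $m\geq d+1$, so there is no shortage of outer normals; what needs to be shown is that these normals span $\bb{R}^d$.

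I would argue by contradiction. Suppose that the set of outer unit normals $\{n_1,\ldots,n_m\}$ spans a subspace of dimension at most $d-1$. Then there exists a nonzero vector $v\in\bb{R}^d$ such that $n_i^\top v=0$ for every $i=1,\ldots,m$. Fix any $x\in P$. For every $t\in\bb{R}$ and every $i$, we have $n_i^\top(x+tv)=n_i^\top x\leq b_i$, so the entire affine line $\{x+tv\ :\ t\in\bb{R}\}$ lies in $P$. This contradicts the boundedness of $P$. Hence the outer unit normals span $\bb{R}^d$, and in particular there exist at least $d$ of them that are linearly independent.

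For the second statement, recall from \eqref{eq:def_JAB_i_tilde} that $N\in\bb{R}^{m\times d}$ is the matrix whose rows are the outer unit normals $n_i^\top$. Since the $n_i$'s span $\bb{R}^d$, the row space of $N$ equals $\bb{R}^d$, so $\rank{N}=d$. That is, $N$ has full column rank, which is the desired conclusion. Since $m\geq d+1\geq d$, this is the maximal possible rank for an $m\times d$ matrix.

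The only delicate point is the appeal to the half-space representation, and specifically the fact that the minimal such representation uses exactly one inequality per facet with the corresponding outer unit normal; this is standard but could be cited from a reference on convex polytopes. Everything else is a one-line boundedness argument, so I anticipate no substantial obstacle.
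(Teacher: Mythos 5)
Your proof is correct and follows essentially the same strategy as the paper's: assume the outer normals span at most a $(d-1)$-dimensional subspace, pick a nonzero $v$ orthogonal to all of them, and derive unboundedness of $P$ for a contradiction. Your route through the half-space representation (showing the whole line $x+tv$ stays in $P$) is in fact a cleaner justification of the unboundedness step than the paper's argument via facets being parallel to $v$, so no issues here.
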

\begin{proof}
The last statement follows from the first, so it suffices to prove the first statement. By Lemma \ref{lemma:d_polytopes_have_dplus1_facets}, every $d$-polytope has at least $d+1$ outer normals. We will prove by contradiction that there exist $d$ linearly independent outer normals. Suppose $P$ has no more than $d-1$ linearly independent outer normals. Then the normals to the facets of $P$ span at most a $(d-1)$-dimensional space, which implies that there exists a hyperplane $H$ in $\mathbb{R}^d$ containing all the outer normals of $P$. Let $v$ be normal to $H$, and let $n$ be an arbitrary outer normal associated to to some facet $F$ of $P$. Then $v$ and $n$ are orthogonal, which implies that $v$ is parallel to $F$, and thus that $F$ is unbounded along the direction of $v$. This implies that $P$ is unbounded, which contradicts the boundedness hypothesis on $P$.
\end{proof}

\end{document}